%% file: CCFPV2_2.tex
\documentclass[12pt]{article}
\pdfoutput=1

\usepackage{amsmath}
\usepackage[utf8]{inputenc}
\usepackage[UKenglish]{babel}

\usepackage[normalem]{ulem} 
\usepackage{aliascnt}
\usepackage{amssymb}
\usepackage{amsthm}
\usepackage{faktor}
\usepackage{mathtools}
\usepackage{amsfonts}
\usepackage{enumerate}
\usepackage{enumitem}
\usepackage{multicol}
\usepackage{float}
\usepackage{tikz}
\usepackage{tikz-cd} 
\usepackage{comment}
\usepackage[round,comma,sort,longnamesfirst
]{natbib}
\usepackage{hyperref}
\hypersetup{
	colorlinks=true,       
	linkcolor=blue,          
	citecolor=olive,       
	filecolor=magenta,      
	urlcolor=blue          
}
\usepackage[capitalise, nameinlink]{cleveref}
\crefname{subsection}{subsection}{subsections}
\newcommand{\Z}{\mathbb{Z}}

\begin{document}

\def\N{\mathbb N} \def\Z{\mathbb Z} \def\Min{{\rm Min}}
\def\pgcd{{\rm pgcd}}


\title{{\bf Classification of Artin groups admitting retractions onto their parabolic subgroups}}

\author{
\textsc{B.A. Cisneros de la Cruz}\thanks{SECIHTI - UNAM,  Unidad Oaxaca del Instituto de matemáticas de la UNAM León No. 2, Oaxaca de Juárez, Oaxaca, México. 68000. Email: bruno@im.unam.mx},
\textsc{M. Cumplido}\thanks{Instituto de Matemáticas de la Universidad de Sevilla (IMUS) and Departmento de Álgebra, Facultad de Matemáticas, Universidad de Sevilla. Email: cumplido@us.es},
\textsc{I. Foniqi}\thanks{ TU Berlin, Institut für Mathematik, Straße des 17. Juni 136, 10623 Berlin, Germany. Email: foniqi@math.tu-berlin.de},
\textsc{L. Paris}\thanks{Universit\'e Bourgogne Europe, CNRS, IMB, UMR 5584, 21000 Dijon, France. Email: lparis@u-bourgogne.fr}
}


\date{}

\maketitle

\theoremstyle{plain}
\newtheorem{theorem}{Theorem}
\numberwithin{theorem}{section}

\newaliascnt{lemma}{theorem}
\newtheorem{lemma}[lemma]{Lemma}
\aliascntresetthe{lemma}
\providecommand*{\lemmaautorefname}{Lemma}

\newaliascnt{proposition}{theorem}
\newtheorem{proposition}[proposition]{Proposition}
\aliascntresetthe{proposition}
\providecommand*{\propositionautorefname}{Proposition}

\newaliascnt{corollary}{theorem}
\newtheorem{corollary}[corollary]{Corollary}
\aliascntresetthe{corollary}
\providecommand*{\corollaryautorefname}{Corollary}

\newaliascnt{conjecture}{theorem}
\newtheorem{conjecture}[conjecture]{Conjecture}
\aliascntresetthe{conjecture}
\providecommand*{\conjectureautorefname}{Conjecture}

\theoremstyle{remark}

\newaliascnt{claim}{theorem}
\newaliascnt{remark}{theorem}
\newtheorem{claim}[claim]{Claim}
\newtheorem{remark}[remark]{Remark}
\newaliascnt{notation}{theorem}
\newtheorem{notation}[notation]{Notation}
\aliascntresetthe{notation}
\providecommand*{\notationautorefname}{Notation}

\aliascntresetthe{claim}
\providecommand*{\claimautorefname}{Claim}

\aliascntresetthe{remark}
\providecommand*{\remarkautorefname}{Remark}

\newtheorem*{claim*}{Claim}
\theoremstyle{definition}

\newaliascnt{definition}{theorem}
\newtheorem{definition}[definition]{Definition}
\aliascntresetthe{definition}
\providecommand*{\definitionautorefname}{Definition}

\newaliascnt{example}{theorem}
\newtheorem{example}[example]{Example}
\aliascntresetthe{example}
\providecommand*{\exampleautorefname}{Example}

\newaliascnt{question}{theorem}
\newtheorem{question}[question]{Question}
\aliascntresetthe{question}
\providecommand*{\exampleautorefname}{Question}

\def\autorefspace{\hspace*{-0.5pt}}
\def\sectionautorefname{Section\autorefspace}
\def\subsectionautorefname{Section\autorefspace}
\def\subsubsectionautorefname{Section\autorefspace}
\def\figureautorefname{Figure\autorefspace}
\def\subfigureautorefname{Figure\autorefspace}
\def\tableautorefname{Table\autorefspace}
\def\equationautorefname{Equation\autorefspace}
\def\Itemautorefname{item\autorefspace}
\def\Hfootnoteautorefname{footnote\autorefspace}
\def\AMSautorefname{Equation\autorefspace}

\begin{abstract}
We classify the Artin groups that admit retractions onto all of their parabolic subgroups. Our approach relies on a detailed analysis of triangular subgroups, with a key ingredient being the classification of homomorphisms between dihedral Artin groups that map one of the standard generators to a standard generator. As a consequence, we show that whenever an Artin group admits retractions to parabolic subgroups, it also admits ordinary ones - that is, retractions that send each standard generator either to a standard generator or to the identity.
\end{abstract}

\noindent
{\small \textbf{AMS Subject Classification (2020):} 20F36.

\medskip\noindent
\textbf{Keywords:} Artin groups, Parabolic subgroups, Retractions.}

\section{Introduction}

Retractions provide a fundamental mechanism for understanding how large algebraic structures relate to their natural substructures. In group theory, a retraction is a homomorphism from a group onto a subgroup that restricts to the identity on that 
subgroup; more precisely, if $G$ is a group and $H \leq G$ a subgroup, then a \emph{retraction} of $G$ onto $H$ is a group homomorphism 
\[
\varphi : G \longrightarrow H
\]
such that $\varphi|_{H} = \mathrm{id}_{H}$.
The existence of such maps allows the global structure of a group to be studied through its smaller components. In this article we study retractions of Artin groups onto some special subgroups, called parabolic subgroups. 

\medskip

Let~$S$ be a finite set.
A \emph{Coxeter matrix} over~$S$ is a square matrix $M = (m_{s,t})_{s,t \in S}$ indexed by the elements of~$S$, having coefficients in~$\N \cup \{ \infty \}$, and satisfying $m_{s,s} = 1$ for every $s \in S$,
and $m_{s,t} = m_{t,s} \ge 2$ for every $s,t \in S$, $s \neq t$. We denote by $\Pi (s,t, m)$ the alternating word $sts \cdots$ of length~$m$. The \emph{Artin group} associated to~$M$ is the group~$A =A_S$ defined by the following presentation.
\[
A_S = \langle S \mid \Pi (s,t, m_{s,t}) = \Pi (t,s, m_{s,t}), \text{ for } s,t \in S,\ s \neq t,\ m_{s,t} \neq \infty \rangle\,.
\]
These groups were introduced by Jacques Tits in \citep{TitsArtin}. 

The \emph{standard parabolic subgroup} (also called the \emph{special subgroup}) $A_X$  of $A_S$ is the subgroup generated by $X \subseteq S$. It coincides with the Artin group associated to the submatrix of $M$ obtained by restricting to the rows and columns indexed by $X$ \citep{van1983homotopy}. Any conjugate of a standard parabolic subgroup is called a \emph{parabolic subgroup}. These subgroups retain much of the combinatorial and geometric information of the ambient group, and their role in the structure theory of Artin groups is crucial. For instance, they play an important role in the construction of complexes used to study Artin groups such as the Salvetti complex \citep{SalvettiComplex,Paris2014}, the Deligne complex \citep{CharneyDavis}, or the complex of irreducible parabolic subgroups \citep{cumplido2019parabolic}.  

\medskip
We say that $A_S$ \emph{admits a retraction} onto $A_X$ if there exists a homomorphism 
\[
\varphi : A_S \to A_X \quad \text{such that} \quad \varphi|_{A_X} = \operatorname{id}_{A_X}.
\]
We say that $A_S$ is \emph{parabolic-retractable} if it admits a retraction onto every standard parabolic subgroup. 
If $A_S$ is parabolic-retractable, then $A_S$ admits retractions to all parabolic subgroups; indeed, let $\alpha A_X \alpha^{-1}$ be a parabolic subgroup for some $\alpha \in A_S$, and let $\rho_X \colon A_S \to A_X$ be a retraction. Then the map
\[
\rho_X^\alpha \colon A_S \to \alpha A_X \alpha^{-1}, 
\qquad  
\rho_X^\alpha(g) = \alpha \,\rho_X(\alpha^{-1} g \alpha)\,\alpha^{-1},
\]
is a well-defined retraction as well. Indeed, for $g \in A_S$, we have $\rho_X (\alpha^{-1} g \alpha) \in A_X$, hence
\[
\rho_X^\alpha (g) = \alpha \rho_X (\alpha^{-1} g \alpha) \alpha^{-1} \in \alpha A_X \alpha^{-1}\,.
\] 
Moreover, for $g \in \alpha A_X \alpha^{-1}$, we have $\alpha^{-1} g \alpha \in A_X$, hence
\[
\rho_X^\alpha (g) = \alpha \rho_X (\alpha^{-1} g \alpha) \alpha^{-1} = \alpha \alpha^{-1} g \alpha \alpha^{-1} = g\,.
\]

\medskip

The presence of retractions often leads to significant structural results. For example, they have been used to study intersections of parabolic subgroups and to establish various results in right-angled Artin groups \citep{antolin2015tits}, in even Artin groups \citep{AntolinFoniqi, antolin2023subgroups}, and in FC-type Artin groups \citep*{CCF}. In the even case, $A_S$ is always parabolic-retractable: for any $X \subseteq S$, there is an evident retraction~$\rho: A_S \rightarrow A_X$ defined by~$\rho(s)=1$ if~$s \notin X$ and~$\rho(s)=s$ otherwise \citep{antolin2015tits}. This does not apply in general. Beyond the even case, a classification of parabolic-retractable Artin groups was known only for FC-type.

\medskip

The aim of this work is to advance the systematic study of retractions in Artin groups. We classify the Artin groups that are parabolic-retractable and establish criteria expressed in terms of their defining Coxeter matrices. Our approach reduces the problem to the study of homomorphisms between dihedral Artin groups, which serves as the foundation for the analysis of triangular subgroups. This reduction allows us to extend and generalize previous results known for right-angled and even Artin groups, and to address the broader question of how retractions influence the global structure of Artin groups. As we will show, the classification of parabolic-retractable groups links algebraic properties of Artin groups to combinatorial conditions on their Coxeter matrices. We now state our main results.

\begin{theorem}\label{theoremA}
    Let $A_S$ be any Artin group. Then $A_S$ is parabolic-retractable if and only if, for any triple $a, b, c \in S$ of pairwise distinct elements, $A_{a,b,c}$ retracts onto $A_{b,c}$.
\end{theorem}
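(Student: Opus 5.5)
The implication from left to right is formal. Suppose $\rho\colon A_S\to A_{b,c}$ is a retraction. Its restriction to $A_{a,b,c}\le A_S$ takes values in $A_{b,c}$ and is the identity on $A_{b,c}$, so it is a retraction $A_{a,b,c}\to A_{b,c}$.

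For the converse I would reduce to removing one generator at a time. Assume the triangle condition holds for $S$. It is inherited by every subset $Y\subseteq S$, because $A_Y$ is itself the Artin group of the restricted Coxeter matrix \citep{van1983homotopy}. So it suffices to build, for every $Y$ and every $a\in Y$, a retraction $\rho_a\colon A_Y\to A_{Y\setminus\{a\}}$. Given $X\subseteq S$, list $S\setminus X=\{a_1,\dots,a_k\}$ and compose the retractions $A_S\to A_{S\setminus\{a_1\}}\to\cdots\to A_X$. The composite is the identity on $A_X$, so it is a retraction onto $A_X$. If $|Y|\le 2$ the one-step retraction always exists: send $a$ to $1$ (or to $b$), which respects the relations of any dihedral group $A_{a,b}$. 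If $|Y|=3$ it exists by hypothesis.

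The real work is the case $|Y|\ge 4$. Here I want to send $a$ to an element $g\in A_{Y\setminus\{a\}}$, fix every other generator, and check for each $t\neq a$ with $m_{a,t}<\infty$ that $\Pi(g,t,m_{a,t})=\Pi(t,g,m_{a,t})$. The main tool is the classification of homomorphisms between dihedral Artin groups sending a standard generator to a standard generator, which the abstract announces. Restricting a triangular retraction $A_{a,b,c}\to A_{b,c}$ to $A_{a,b}$ gives exactly such a homomorphism $A_{a,b}\to A_{b,c}$ with $b\mapsto b$. From this I would (i) list the triples $(m_{a,b},m_{a,c},m_{b,c})$ for which the triangle retraction exists, and (ii) show that when it exists one can take it \emph{ordinary}, with $a\mapsto 1$ or $a\mapsto b$ or $a\mapsto c$. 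I expect step (i)/(ii) to be the main obstacle, since it is a case analysis over the parities and divisibility of the three labels.

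With that list in hand, I would choose $g$ globally as follows.
\begin{enumerate}[label=(\arabic*)]
\item If every $m_{a,t}$ ($t\neq a$) is even or $\infty$, put $g=1$. Each relation then reads $t^{m/2}=t^{m/2}$, so it holds.
\item Otherwise pick $b$ with $m_{a,b}$ odd and put $g=b$. For each other $t$, apply the triangle classification to $\{a,b,t\}$.
\begin{enumerate}[label=(\alph*)]
\item If $m_{a,t}$ is odd, the list should force $m_{b,t}=m_{a,t}$, or some compatible relation, so that $b$ and $t$ satisfy the braid relation of length $m_{a,t}$.
\item If $m_{a,t}$ is even, the list should force $m_{b,t}$ to divide $m_{a,t}$ in the appropriate sense, for instance $m_{b,t}=2$. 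Then $\Pi(b,t,m_{a,t})=\Pi(t,b,m_{a,t})$ follows from the relation between $b$ and $t$.
\end{enumerate}
\end{enumerate}

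The delicate point is that the triangle retractions for different $t$ might need different images of $a$ (say $b$ for one triangle and $t$ or $1$ for another). So I must use the classification to show that whenever $a$ has an odd edge, the image $b$ is admissible in \emph{every} triangle containing $a$ and $b$. I would try first to prove this consistency statement directly from the list in (i). If it fails, I would refine the choice of $b$ within the set of odd neighbours of $a$, using the triangle condition on triples of odd neighbours to show that they all behave identically with respect to the remaining generators.
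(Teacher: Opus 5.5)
You have the paper's architecture: delete one generator at a time (\autoref{lemma_0_induction}), extract constraints on the labels by restricting triangle retractions to edges and applying \autoref{theorem:classification_of_homo}, and then send the deleted generator either to $1$ or to an odd neighbour. However, the proposal stops where the actual work begins. Step (i) is never carried out, and (a) and (b) only say what ``the list should force''. What the triangle hypothesis actually yields is the following. Two odd edges at a vertex force the third edge to be odd: an odd edge mapped into an even or $\infty$ edge must send $a\mapsto b$, so $a\mapsto b$ and $a\mapsto c$ collide. An odd edge and an even edge at a vertex force the third edge to be even. An all-odd triangle has, up to permutation, $m_{a,b}=m_{a,c}$ and $m_{b,c}\mid m_{a,b}$. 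Finally, if $m_{a,b},m_{a,c}$ are even and $m_{b,c}$ is odd, then $m_{a,b}=m_{a,c}$; this needs the map $\xi(a)=0$, $\xi(c)=1$ to exclude the central alternative. Your guess in (b) is in fact ruled out. For $m_{a,b}=3$, $m_{a,t}=4$, $m_{b,t}=2$ there is no retraction of $A_{a,b,t}$ onto $A_{a,t}$; the hypothesis forces $m_{b,t}=m_{a,t}$.

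More seriously, the consistency claim in (2) is false for an arbitrary odd neighbour. Take $m_{a,b}=9$, $m_{a,t}=3$, $m_{b,t}=9$. This triangle satisfies the hypothesis: $a\mapsto t$, $b\mapsto t$ and $t\mapsto a$ give the three retractions. Yet $a\mapsto b$ would require $btb=tbt$ in $A_{b,t}$ with $m_{b,t}=9$, which already fails in the Coxeter group $I_2(9)$. The missing idea is to take $b$ with $m_{a,b}$ \emph{minimal} among the odd neighbours of $a$. The all-odd triangle shape then gives $m_{b,t}\mid m_{a,t}$ for every other odd neighbour $t$ (with equality when $m_{a,b}<m_{a,t}$). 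The even case gives $m_{b,t}=m_{a,t}$. Divisibility of labels then implies the required braid relation; this is \autoref{lemma_7_inductive_retract}. Your fallback, showing that odd neighbours ``behave identically with respect to the remaining generators'', aims at the wrong thing. They do agree toward vertices outside the odd component, but their labels to $a$ and to one another differ, and that is exactly where the choice of $b$ matters. (Minor: for $|Y|=2$, the map $a\mapsto 1$ does not respect an odd relation; only $a\mapsto b$ works uniformly.)
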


We say that $M$ is \emph{odd} if all its entries are different from $\infty$ and are odd numbers. We say that $M$ is \emph{retract-compatible} if it is odd and, for any triple $a, b, c \in S$ of pairwise distinct elements, we have, up to permutation, that $m_{a,b} = m_{a,c}$ and $m_{b,c}$ divides $m_{a,b}$. A Coxeter matrix of rank 1 is always considered to be retract-compatible.

We say that $M$ is \emph{parabolic-retract-compatible} if there exists a partition $S = T_1 \sqcup \dots \sqcup T_k$ such that 
\begin{enumerate}
    \item For all $i \in \{1, \dots, k\}$, $M_{T_i}$ is retract-compatible.
    \item For each pair $i, j \in \{1, \dots, k\}$ with $i \neq j$, there exists $n_{ij}$, which is either an even number or $\infty$, such that $m_{a,b} = n_{ij}$ for all $(a,b) \in T_i \times T_j$. 
\end{enumerate}

\begin{theorem}\label{theoremB}
  Let $M$ be a Coxeter matrix and let $A_S$ be its associated Artin group. Then $A_S$ is parabolic-retractable if and only if $M$ is parabolic-retract-compatible. 
\end{theorem}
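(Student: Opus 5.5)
By Theorem~\ref{theoremA}, $A_S$ is parabolic-retractable if and only if every triangular subgroup $A_{a,b,c}$ retracts onto each of its edge subgroups $A_{b,c}$. The proof of Theorem~\ref{theoremB} therefore reduces to two tasks: classify the triangles $\{a,b,c\}$ for which all three edge retractions exist, and check that these local conditions glue exactly into the parabolic-retract-compatible condition.

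\textbf{Sufficiency.} Assume $M$ is parabolic-retract-compatible, with partition $S=T_1\sqcup\dots\sqcup T_k$. We build \emph{ordinary} retractions $\rho_X\colon A_S\to A_X$ directly, each sending every generator to a generator or to $1$.

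First consider a single block $T_i$ with $M_{T_i}$ retract-compatible. Since all entries are odd, all generators of $A_{T_i}$ are conjugate. For $s\notin X$ we set $\rho(s)$ to be a suitable $x\in X$. Each relation must then be checked:
\begin{itemize}
\item An odd relation $\Pi(s,t,m)=\Pi(t,s,m)$ with both $s,t$ sent to the same $x$ holds trivially.
\item For $s\notin X$ and $t\in X$, choose $x$ so that $m_{s,t}$ is a multiple of $m_{x,t}$. Then $\Pi(x,t,m_{s,t})=\Pi(t,x,m_{s,t})$ holds, because odd $m$ divisible by $m_{x,t}$ gives a consequence of the braid relation. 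This is precisely where the divisibility in the triangle condition enters.
\end{itemize}
The choice of $x$ must be made consistently. I would use the triangle structure (for any three generators, two of the three labels are equal and are multiples of the third) to show that the minimal labels form an ultrametric-like, tree-like structure. One can then send $s$ to a nearest element of $X$, i.e.\ one realising the smallest $m_{s,x}$, and check all relations.

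Next, across blocks: if $X\cap T_i=\emptyset$, send $T_i$ to $1$. This is compatible with the even or $\infty$ relations between blocks, since an even relation $(st)^{n/2}=(ts)^{n/2}$ survives killing one side. Otherwise retract $T_i$ onto $X\cap T_i$ as above. The relations between blocks are preserved because $m_{a,b}=n_{ij}$ is constant on $T_i\times T_j$, so replacing $a$ by another element of $T_i$ does not change the label.

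\textbf{Necessity.} This is the main obstacle. We need a classification of the triangles $\{a,b,c\}$ with Coxeter labels $(p,q,r)$ for which $A_{a,b,c}$ retracts onto each edge. A retraction $A_{a,b,c}\to A_{b,c}$ restricts to a homomorphism $A_{a,b}\to A_{b,c}$ that fixes $b$, i.e.\ it maps a standard generator to a standard generator. So the key input is a classification of homomorphisms between dihedral Artin groups sending a standard generator to a standard generator. I expect these to be essentially: the image of $a$ lies in the centraliser-type data of $b$, and in the odd case the image is forced to be a conjugate of a generator, with divisibility constraints on the labels.

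From this classification, the plan is to derive the following triangle conditions:
\begin{itemize}
\item The number of odd edges in a triangle is $0$, $1$ or $3$; it cannot be exactly $2$. With two odd edges, the retraction onto the remaining edge would force $a$ to be conjugate to both $b$ and $c$, which is impossible when the label $m_{b,c}$ is even or $\infty$, using abelianisation-type arguments.
\item If exactly one edge $\{b,c\}$ is odd, then $m_{a,b}=m_{a,c}$ is even or $\infty$.
\item If all three edges are odd, then, up to permutation, $m_{a,b}=m_{a,c}$ and $m_{b,c}\mid m_{a,b}$.
\end{itemize}
I expect the most technical case to be the all-odd one, which requires the full dihedral classification. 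The other cases follow mostly from abelianisation and parity arguments.

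\textbf{Gluing.} Finally, define $T_i$ to be the connected components of the odd graph. The first triangle condition makes ``odd-adjacent'' transitive, so each component is a clique. Each clique is retract-compatible by the all-odd case. The one-odd-edge case, propagated along the clique, shows that $m_{a,b}$ is constant on $T_i\times T_j$ and is even or $\infty$. Hence $M$ is parabolic-retract-compatible.
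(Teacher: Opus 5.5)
\textbf{Gap in sufficiency.} Inside a block you check only relations between a removed generator and a kept one. You also need the case of two removed generators $s,s'\notin X$ sent to distinct $x\neq x'$ in $X$; there you need $m_{x,x'}\mid m_{s,s'}$. Your rule ``send $s$ to a nearest element of $X$'' can fail in this case if ties are broken independently.

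Here is a counterexample. Take $T=\{s,s',x,x'\}$ with $m_{s,s'}=3$, $m_{x,x'}=5$ and all other labels equal to $15$. This matrix is retract-compatible. Both $x$ and $x'$ are nearest to both $s$ and $s'$. Choosing $s\mapsto x$ and $s'\mapsto x'$ sends $ss's=s'ss'$ to $xx'x=x'xx'$. That relation is false in $A_{\{x,x'\}}$, already in the dihedral group of order $10$.

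The paper avoids the pair case by removing one generator at a time:
\begin{itemize}
\item Lemma~\ref{lemma_7_inductive_retract} sends $x$ to some $y$ with $m_{x,y}$ minimal and shows $m_{y,z}\mid m_{x,z}$ for every other $z$.
\item Lemma~\ref{lemma_8_explicit_retract} extends this across blocks.
\item These retractions are composed via Lemma~\ref{lemma_0_induction}, using that parabolic-retract-compatibility passes to submatrices.
\end{itemize}

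Your one-shot construction can also be repaired. Fix a total order on $X$ and use it to break ties. In a retract-compatible triangle every label divides the maximal label, and that maximum occurs at least twice; hence $m_{u,v}\mid\max(m_{u,w},m_{w,v})$ always. Put $x=\rho(s)$ and $x'=\rho(s')$.
\begin{itemize}
\item If $m_{s,x},m_{s',x'}\le m_{s,s'}$, then $m_{s,x'}\mid m_{s,s'}$ and $m_{x,x'}\mid m_{s,x'}$, so $m_{x,x'}\mid m_{s,s'}$.
\item Otherwise, say $m_{s,x}>m_{s,s'}$. Then every $y\in X$ has $m_{s,y}>m_{s,s'}$, which forces $m_{s',y}=m_{s,y}$. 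So $s$ and $s'$ have the same set of nearest elements, and hence the same image.
\end{itemize}
The same inequality also settles your kept-generator case: $m_{x,t}\mid\max(m_{x,s},m_{s,t})=m_{s,t}$.

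\textbf{Necessity.} Your plan follows the paper's route: triangle conditions from the classification of dihedral homomorphisms fixing a generator, then gluing along components of the odd graph as in Lemma~\ref{lemma_6}. Your conjugacy-plus-abelianisation argument excluding triangles with exactly two odd edges is correct, and more elementary than Lemma~\ref{lemma_1_odd_odd_odd}.

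The one-odd-edge condition, however, is not an abelianisation/parity matter. Take the odd edge to be $\{b,c\}$ and retract onto $A_{\{a,c\}}$. Knowing that $\varphi(b)$ is a conjugate of $c$ and satisfies the $m_{a,b}$-braid relation with $a$ gives nothing on abelianisations. It neither forces $m_{a,c}\mid m_{a,b}$ nor excludes $m_{a,c}=\infty$. The paper uses Theorem~\ref{theorem:classification_of_homo}(2),(4),(6) in Lemmas~\ref{lemma_2_odd_even_odd} and~\ref{lemma_5_even_even_odd}. At the very least you need centraliser information, such as Lemma~\ref{lem: missing lemma} or cyclic centralisers in free groups.

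Finally, avoid opening with Theorem~\ref{theoremA}: in the paper it is deduced from Theorem~\ref{theoremB}. Your argument only uses its trivial direction, so this is harmless but should be removed.
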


We say that a retraction $\varphi : A_S \longrightarrow A_X$ is \emph{ordinary} if $\varphi(x) = x$ for every $x \in X$ and $\varphi(y) \in X \cup \{1\}$ for every $y \in S \setminus X$. That is, the homomorphism sends standard generators either to standard generators or to the identity. The next theorem answers an open question asked in \citep*[Remark~2.8]{CCF}.

\begin{theorem}\label{theoremC}
Let $A_S$ be a parabolic-retractable Artin group and let $X \subseteq S$.
There exists an 
ordinary retraction $\varphi : A_S \to A_X$.
\end{theorem}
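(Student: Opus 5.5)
Assuming \cref{theoremB}, the Coxeter matrix $M$ is parabolic-retract-compatible. Fix a partition $S = T_1 \sqcup \dots \sqcup T_k$ as in the definition and a subset $X \subseteq S$. I will define $\varphi$ on generators, separating the case where a block meets $X$ from the case where it does not, and then check the Artin relations one pair $a,b$ at a time. Set $\varphi(x)=x$ for $x\in X$. For a block $T_i$ with $T_i\cap X\neq\emptyset$, choose some $x_i\in T_i\cap X$ and set $\varphi(y)=x_i$ for every $y\in T_i\setminus X$. For a block $T_i$ with $T_i\cap X=\emptyset$, set $\varphi(y)=1$ for every $y\in T_i$. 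Clearly $\varphi$ restricts to the identity on $X$, and its image lies in $A_X$, so it remains to show that $\varphi$ respects all defining relations.

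Take a relation with $m=m_{a,b}<\infty$. If one of $a,b$ is sent to $1$, then either both are sent to $1$, or one is sent to a generator $g$ and the other to $1$. In the latter case $m$ must be even: both cannot lie in the same block, since that block would then have to both meet and avoid $X$. The relation then becomes $g^{m/2}=g^{m/2}$, which holds. If $a,b$ lie in different blocks $T_i,T_j$ and neither is killed, their images lie in $T_i\cap X$ and $T_j\cap X$ respectively. Condition 2 of the definition then says the images satisfy a relation with the same label $n_{ij}=m$. If $a,b$ lie in the same block $T_i$ and both are sent to the same generator, the relation holds trivially since $\Pi(g,g,m)=g^m$. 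The only remaining situation is $a,b\in T_i$ with images $x_i$ and $x$ distinct. This occurs when $a\in X$ (so $\varphi(a)=a$, possibly with $a=x_i$) and $b\notin X$, or when both lie in $X$, which is trivial.

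The main obstacle is therefore the case $a\in T_i\cap X$ with $a\neq x_i$ and $b\in T_i\setminus X$, where the relation of length $m_{a,b}$ must hold between $a$ and $x_i$, whose label is $m_{a,x_i}$. Since $m$ is odd, the relation $\Pi(a,x_i,m)=\Pi(x_i,a,m)$ holds in $A_X$ exactly when $m_{a,x_i}$ divides $m$. Indeed, for odd $p$ dividing odd $m$, the dihedral Artin group with label $p$ satisfies the label-$m$ relation, because $\Pi(s,t,m)$ is obtained by iterating the Garside element $\Delta_p$. So I need $m_{a,x_i}\mid m_{a,b}$. The retract-compatible condition on the triple $\{a,b,x_i\}$ only says that two of the three labels are equal and the third divides them. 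This gives $m_{a,x_i}\mid m_{a,b}$ except when $m_{a,x_i}=m_{b,x_i}$ is the large label and $m_{a,b}$ properly divides it.

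To handle that case, I will choose $x_i$ carefully. The retract-compatible triple condition says that each triangle in $T_i$ is ``isosceles with short base''. This suggests the relation ``$m_{u,v}$ divides all labels at $u$'' behaves like an ultrametric: define $u\sim_d v$ iff $m_{u,v}\mid d$, and this is an equivalence relation on $T_i$ for each $d$. I would then pick $x_i$ to be, for every $b\notin X$, a vertex of $T_i\cap X$ closest to $b$, meaning one minimising $m_{b,x}$ in the divisibility order. Such a uniform choice need not exist, so I would instead let $\varphi(b)$ depend on $b$: set $\varphi(b)=x_b$ with $x_b\in T_i\cap X$ minimising $m_{b,x_b}$. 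The ultrametric inequality then gives $m_{a,x_b}\mid \max(m_{a,b},m_{b,x_b})$, and the minimality of $m_{b,x_b}$ reduces this to $m_{a,b}$, as required. Finally, I will recheck the same-block pairs $b,b'\notin X$ with $x_b\neq x_{b'}$: here the ultrametric property shows $m_{x_b,x_{b'}}\mid m_{b,b'}$ unless the two images coincide. This is the delicate step, and I expect to verify it via a careful case analysis of the triangles $\{b,b',x_b\}$ and $\{b,b',x_{b'}\}$.
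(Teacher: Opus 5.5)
Your reduction is sound up to the last step. The killed blocks, the cross-block pairs, and the pairs $a\in T_i\cap X$, $b\in T_i\setminus X$ all work. For the latter: the labels $m_{b,x}$ with $x\in T_i\cap X$ are totally ordered by divisibility, so minimality gives $m_{b,x_b}\mid m_{a,b}$, hence $m_{a,x_b}\mid m_{a,b}$. The gap is the pair $b,b'\in T_i\setminus X$. The claim you state there, that $x_b\ne x_{b'}$ forces $m_{x_b,x_{b'}}\mid m_{b,b'}$, is false, because your rule does not say how ties between minimisers are broken. Take a single block $S=\{b,b',x,x'\}$ with $m_{b,b'}=3$, $m_{x,x'}=5$, all other labels $15$, and $X=\{x,x'\}$. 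Every triangle has labels $\{3,15,15\}$ or $\{5,15,15\}$, so $M$ is retract-compatible. Both $x$ and $x'$ minimise $m_{b,\cdot}$ and $m_{b',\cdot}$, so your rule allows $x_b=x$ and $x_{b'}=x'$. Then $bb'b=b'bb'$ is sent to $xx'x=x'xx'$, which fails in $A_X$ (already in its Coxeter quotient, where $xx'$ has order $5$). So the deferred case analysis cannot succeed as stated.

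The missing idea is a coherent choice: fix a total order on $T_i\cap X$ and let $x_b$ be the least minimiser. Write $p=m_{b,x_b}$, $p'=m_{b',x_{b'}}$, $r=m_{b,b'}$ and $s=m_{x_b,x_{b'}}$. Minimality and the ultrametric property give $s\mid m_{b,x_{b'}}$, and $m_{b,x_{b'}}$ divides the larger of $r,p'$. Symmetrically, $s$ divides the larger of $r,p$. So $s\mid r$ unless $r<p$ and $r<p'$. In that remaining case, for every $x\in T_i\cap X$ the triangle $\{b,b',x\}$ has $m_{b,x}\ge p>r$ and $m_{b',x}\ge p'>r$, so its two equal labels must be $m_{b,x}=m_{b',x}$. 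Hence $b$ and $b'$ have the same minimisers, and the tie-break forces $x_b=x_{b'}$. With this repair your one-shot construction is a valid alternative route, and it gives an explicit formula for $\varphi$. The paper avoids the issue altogether. It removes one generator at a time via \autoref{lemma_8_explicit_retract} (built on \autoref{lemma_7_inductive_retract}), using that submatrices of a parabolic-retract-compatible matrix remain so, and composes the results. Since each step deletes a single generator, two deleted generators are never compared, and a composition of ordinary retractions is ordinary.
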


The strategy we will follow is based on the classification of homomorphisms between dihedral Artin groups that send one standard generator to another standard generator, which we will describe in \autoref{section2}. Then, in \autoref{section3}, we will use this classification to study retractions on triangles and show that this is sufficient to analyze all possible retractions. Some examples are presented in what follows.

\paragraph{Acknowledgments.} 
The first author is supported by SECIHTI through the Basic and Frontier Science Project CF2023G106.
The second author was supported by the Spanish Ministry of Science and Innovation (MCIN/AEI/10.13039/501100011033/FEDER, EU) [grant number PID2022-138719NA-I00].
The third author acknowledges support from the EPSRC Fellowship grant EP/V032003/1 ``Algorithmic, topological and geometric aspects of infinite groups, monoids and inverse semigroups''.
The fourth author is partially supported by the French project ``CaGeT'' (ANR-25-CE40-4162) of the ANR.
The IMB receives support from the EIPHI Graduate School (contract ANR-17-EURE-0002).

\subsection*{Examples}\label{section:examples}

We now describe a family of parabolic-retractable Artin groups that can be defined recursively. 
We construct an Artin group on $n$ generators with a retract-compatible Coxeter matrix as follows:

\begin{enumerate}
    \item Choose an odd number $m_1$ and set $m_{1,j} = m_1$ for every $j > 1$.
    \item For each $i = 2, \dots, n$, choose $m_i$ such that $m_i$ divides $m_{i-1}$, and set 
    $m_{i,j} = m_i$ for every $j > i$.
\end{enumerate}

Since a Coxeter matrix is symmetric and all diagonal entries are equal to $1$, this is well defined. 
Also note that for every triple $\{i, j, k\}$ with $i < j < k$, we have $m_{i,j} = m_{i,k}$, 
which is divisible by $m_{j,k}$; hence the construction yields a retract-compatible Coxeter matrix.

We represent Artin groups graphically using Coxeter graphs: the vertex set is the generating set 
of the group, and we draw an edge labelled $m_{a,b}$, $a \neq b$, whenever $m_{a,b} \neq \infty$. 
An example of a group from the family described above can be found in 
\autoref{figure:example_retract-compatible}.

\begin{figure}[ht]
\centering
\def\svgwidth{0.5\textwidth}
{\scriptsize
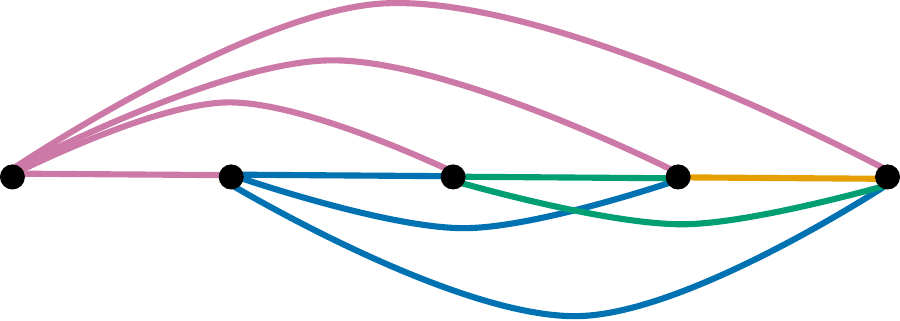}
\caption{An Artin group with a retract-compatible Coxeter matrix.}
\label{figure:example_retract-compatible}
\end{figure}

To construct a parabolic-retractable Artin group, take Coxeter graphs 
$\Gamma_1, \Gamma_2, \dots$, each arising from a retract-compatible Coxeter matrix. 
For every pair $(i,j)$ with $i \neq j$, connect all vertices of $\Gamma_i$ to all 
vertices of $\Gamma_j$ by edges all labeled with the same even integer, or do not put any edge between $\Gamma_i$ and $\Gamma_j$. An example of parabolic retractable Artin group is given in \autoref{figure:parabolic-retractable}.

\begin{figure}[ht]
\centering
\def\svgwidth{0.5\textwidth}
{\scriptsize
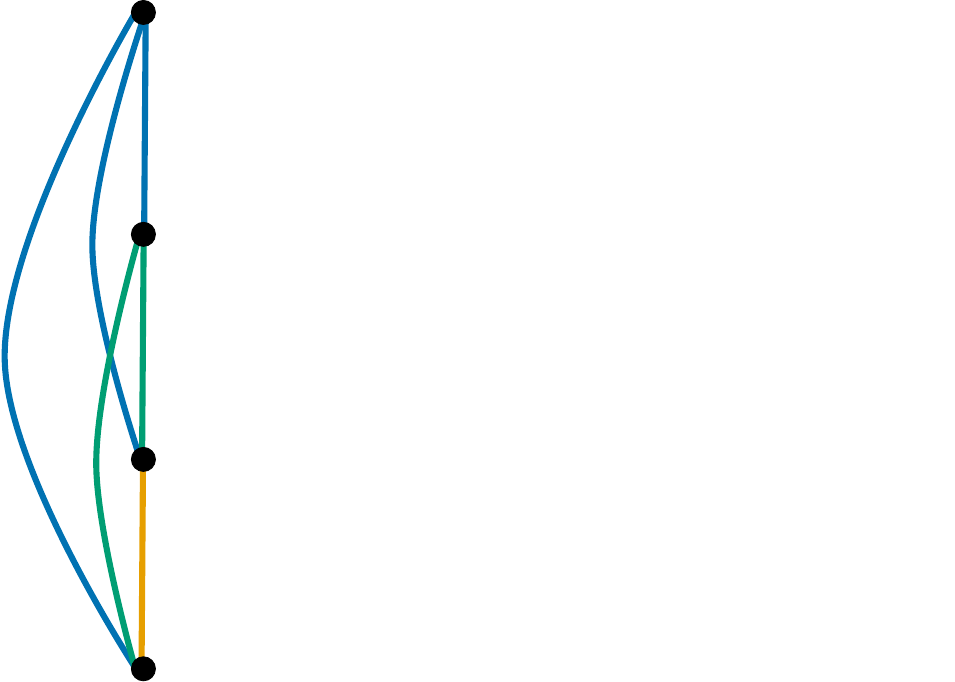}
\caption{A parabolic-retractable Artin group.}
\label{figure:parabolic-retractable}
\end{figure}

Note that this is not the only possible family of parabolic-retractable Artin groups, 
since the edge labels in a Coxeter graph do not need to be ordered by divisibility 
(see \autoref{figure:example_not_division}).

\begin{figure}[ht]
\centering
\def\svgwidth{0.2\textwidth}
{\scriptsize
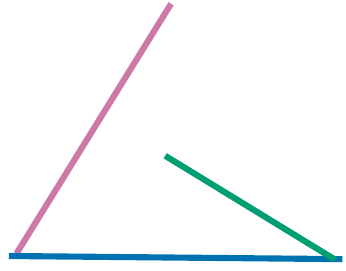}
\caption{Another Artin group with a retract-compatible Coxeter matrix whose labels are not ordered by division.}
\label{figure:example_not_division}
\end{figure}

\section{Classification of homomorphisms between dihedral Artin groups}\label{section2}


In our study, several elements of dihedral Artin groups play a special role, so we present them before stating our first main result. 
Let $A$ be a dihedral Artin group with the following presentation:
\[
A = \langle a_1, a_2 \mid \Pi (a_1, a_2, m_A) = \Pi (a_2, a_1, m_A) \rangle\,,
\]
where $m_A$ is an integer $\ge 3$ (so $m_A \neq \infty$ and $m_A\neq 2$). 
The \emph{Garside element} of $A$ is $\Delta_A = \Pi (a_1, a_2, m_A) = \Pi (a_2, a_1, m_A)$. 
It is known that the center $Z(A)$ of $A$ is a cyclic group generated by $\delta_A = \Delta_A^2$ if $m_A$ is odd and by $\delta_A = \Delta_A$ if $m_A$ is even (see \citealp{BrieskornSaito}). 
We also set $u_A = a_1 a_2$. 
Note that this definition implies that $a_1$ and $a_2$ are ordered, which will often be the case. 
Furthermore, this element will almost always appear up to conjugation, and $a_2 a_1 = a_2 u_A a_2^{-1}$ is conjugate to $u_A$.
Clearly, $u_A^{m_A} = \delta_A$ if $m_A$ is odd, and $u_A^{m_A/2} = \delta_A$ if $m_A$ is even.

\medskip

The following theorem classifies homomorphisms $\varphi : A \to B$ between dihedral Artin groups~$A$ and~$B$, under the condition that one generator of $A$ is mapped to a generator of $B$. Depending on the values of the defining parameters - labels, the possible images of the second generator vary significantly. The classification covers all combinations of odd, even, and infinite parameters, providing a complete description of how such homomorphisms behave.

\begin{theorem}\label{theorem:classification_of_homo}
    Let $A$ and $B$ be dihedral Artin groups with the following presentations:
\begin{align*}
A & = \langle a_1, a_2 \mid \Pi(a_1, a_2, m_A) = \Pi(a_2, a_1, m_A) \rangle, \\
B & = \langle b_1, b_2 \mid \Pi(b_1, b_2, m_B) = \Pi(b_2, b_1, m_B) \rangle,
\end{align*}
and let $\varphi : A \to B$ be a homomorphism that sends $a_1$ to $b_1$. Then the following classification holds.

\begin{enumerate}
\item If $m_A = \infty$, then $\varphi(a_2)$ can take any value in $B$.

\item If $m_A$ is odd and $m_B$ is even or $\infty$, then $\varphi(a_2) = b_1$.

\item If $m_A$ and $m_B$ are odd, then we have the following alternatives:
\begin{itemize}[leftmargin=*,itemsep=2pt,parsep=0pt,topsep=0pt,partopsep=0pt]
    \item $\varphi(a_2) = b_1$, or
    \item $m_B$ divides $m_A$ and there exists $t \in \mathbb{Z}$ such that $\varphi(a_2) = b_1^t b_2 b_1^{-t}$.
\end{itemize}

\item If $m_A$ is even and $m_B = \infty$, then there exists $t \in \mathbb{Z}$ such that $\varphi(a_2) = b_1^t$.

\item If $m_A$ is even and $m_B$ is odd, then we have the following alternatives:
\begin{itemize}[leftmargin=*,itemsep=2pt,parsep=0pt,topsep=0pt,partopsep=0pt]
    \item There exist $t_1, t_2 \in \mathbb{Z}$ such that $\varphi(a_2) = \delta_B^{t_1} b_1^{t_2}$,
    \item $m_A$ is divisible by $4$ and there exist $\beta \in B$ and $t \in \mathbb{Z}$ such that 
    $\varphi(a_2) = b_1^{-1} \beta \Delta_B^t \beta^{-1}$,
    \item There exist $\beta \in B$ and $t \in \mathbb{Z}$ such that $\varphi(a_2) = b_1^{-1} \beta u_B^{t} \beta^{-1}$ and, if $p = \gcd(t, m_B)$ and $q = \frac{m_B}{p}$, then $q$ divides $m_A/2$.
\end{itemize}

\item If $m_A$ and $m_B$ are even, and $m_B \neq 2$, then we have the following alternatives:
\begin{itemize}[leftmargin=*,itemsep=2pt,parsep=0pt,topsep=0pt,partopsep=0pt]
    \item There exist $t_1, t_2 \in \mathbb{Z}$ such that $\varphi(a_2) = \delta_B^{t_1} b_1^{t_2}$,
    \item There exist $\beta \in B$ and $t \in \mathbb{Z}$ such that $\varphi(a_2) = b_1^{-1} \beta u_B^{t} \beta^{-1}$ and, if $p = \gcd(t, m_B/2)$ and $q = \frac{m_B}{2p}$, then $q$ divides $m_A/2$.
\end{itemize}

\item If $m_A$ is even and $m_B = 2$, then $\varphi(a_2)$ can be any element of $B$.

\end{enumerate}

\end{theorem}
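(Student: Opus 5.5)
The key reformulation is that a homomorphism $\varphi:A\to B$ with $\varphi(a_1)=b_1$ is the same thing as an element $x=\varphi(a_2)\in B$ satisfying $\Pi(b_1,x,m_A)=\Pi(x,b_1,m_A)$ (no relation if $m_A=\infty$). So for each case of the statement I have to find all solutions $x$ of this equation in $B$.

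Cases 1 and 7 are immediate: for $m_A=\infty$ there is no relation, and for $m_B=2$ the group $B\cong\Z^2$ is abelian, so every even relation holds.

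For the remaining cases I treat the two parities of $m_A$ separately.

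\textbf{$m_A$ odd.} The relation makes $b_1$ and $x$ conjugate. Concretely, $x=w b_1 w^{-1}$ with $w=\Pi(b_1,x,m_A-1)^{-1}$-type words. In particular $x$ has the same image as $b_1$ under every homomorphism to an abelian group.
\begin{itemize}
\item If $m_B$ is even or $\infty$, the abelianisation $\Z^2$ separates $b_1$ from $b_2$, so $x$ is a conjugate of $b_1$. In the even case I then pass to the quotient Coxeter group (or use the homomorphism $B\to\Z$, $b_1\mapsto1$, $b_2\mapsto0$, together with centralizer structure). I expect to show that two distinct conjugates of $b_1$ in a large-type dihedral Artin group satisfy no odd braid relation, via the action on the Bass--Serre tree / the amalgam structure $B\cong\langle b_1\rangle *_{\ldots}\cdots$. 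The cleanest route: $B/Z(B)$ is virtually free, in fact a free product $\Z/p*\Z/q$ (or $\Z*\Z/2$) for the relevant $p,q$. Images of $b_1$ are elliptic there, and two conjugates of an elliptic element satisfying an odd braid relation must generate a finite (hence conjugate into a vertex group) subgroup. This forces $x=b_1$ after a centralizer computation.
\item If $m_B$ is odd, the same method applied to $B/Z(B)\cong\Z/2*\Z/m_B$ (images of $b_1$ have infinite order there, so I would rather use the Coxeter quotient $I_2(m_B)$ and a tree argument) shows that $\langle b_1,x\rangle$ is either $\langle b_1\rangle$ or conjugate, by a power of $b_1$, to $B$ with $x\mapsto b_2$. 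In the latter case, the dihedral quotient $I_2(m_B)$ must satisfy the order-$m_A$ relation, forcing $m_B\mid m_A$. Conversely, $b_1^tb_2b_1^{-t}$ works, since $\Pi(b_1,b_2,m_A)=\Pi(b_2,b_1,m_A)$ when $m_B\mid m_A$ with both odd, because $\Delta_B^{2}$ is central.
\end{itemize}

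\textbf{$m_A$ even, $m_A=2k$.} The relation says $(b_1x)^k=(xb_1)^k$, i.e. $y=b_1x$ satisfies $y^k=b_1 y^k b_1^{-1}$. Equivalently, $y^k$ commutes with $b_1$, so $y^k\in C_B(b_1)$. This is the main computational step. In every case $C_B(b_1)=\langle b_1\rangle\times Z(B)$ (just $\langle b_1\rangle$ if $m_B=\infty$), which I will take from the standard centralizer results for dihedral Artin groups. So the task is to classify elements $y$ with $y^k\in\langle b_1\rangle Z(B)$. I do this through $\bar B=B/Z(B)$:
\begin{itemize}
\item For $m_B$ odd, $\bar B\cong \Z/2*\Z/m_B$, generated by the images of $\Delta_B$ and $u_B$.
\item For $m_B$ even, $\bar B\cong\Z*\Z/(m_B/2)$.
\item For $m_B=\infty$, I work in the free group $B$ directly.
\end{itemize}
In a free product, $\bar y^k$ lies in $\langle\bar b_1\rangle$, which is infinite cyclic and acts hyperbolically. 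So either $\bar y^k=1$, meaning $\bar y$ is elliptic, or $\bar y$ is hyperbolic with a power in $\langle \bar b_1\rangle$. In the hyperbolic case, roots are unique up to the maximal cyclic subgroup, so $\bar y\in\langle\bar b_1\rangle$; this gives $x=\delta_B^{t_1}b_1^{t_2}$ (and for $m_B=\infty$, simply $x=b_1^t$). In the elliptic case $\bar y$ is conjugate into a vertex group:
\begin{itemize}
\item $\bar y$ conjugate to $\bar u_B^{t}$ of order $q$, with the condition $q\mid k$;
\item or, when $m_B$ is odd, $\bar y$ conjugate to $\bar\Delta_B$ of order $2$, requiring $2\mid k$, i.e. $4\mid m_A$.
\end{itemize}
Lifting back, $y=\beta u_B^t\beta^{-1}z$ or $y=\beta\Delta_B^t\beta^{-1}z$ with $z$ central, and the central factor is absorbed by adjusting $t$ (since $\delta_B$ is a power of $u_B$ and of $\Delta_B$). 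This gives $x=b_1^{-1}y$ in the stated forms. Conversely, these $y$ have $y^k$ central, so the relation holds.

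The main obstacle I expect is the odd-$m_A$ case with $m_B$ odd. Determining exactly which $x$ satisfy an odd braid relation with $b_1$ requires a genuine structural argument, not just the centralizer trick. My first attempt would be a length/tree argument in $\bar B$ combined with $x$ being conjugate to $b_1$, writing $x=gb_1g^{-1}$ and reducing $g$ modulo $C_B(b_1)$.
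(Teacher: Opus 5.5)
\textbf{Even $m_A$.} Your treatment of even $m_A$ is sound and essentially matches the paper's. The paper also uses that $\varphi(\delta_A)=(b_1x)^{m_A/2}$ commutes with $b_1$, and splits according to whether its $b_1$-exponent vanishes. Where you use roots in $B/Z(B)$, the paper uses the centralizer lemma for $\delta_B^{k_1}b_1^{k_2}$. One slip: for $m_B$ even, $\overline{b_1}$ generates the free factor $\Z$ of $B/Z(B)\cong\Z*\Z/(m_B/2)$, so it is elliptic, not hyperbolic. The conclusion $\bar y\in\langle\overline{b_1}\rangle$ still holds, because $\bar y^k=\overline{b_1}^j\neq 1$ fixes a unique vertex, whose stabilizer is $\langle\overline{b_1}\rangle$.

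\textbf{The gap: odd $m_A$.} This is the half that Section~\ref{section3} relies on, and your argument for it does not work.
\begin{itemize}
\item For $m_B$ even or $\infty$, you appeal to the claim that two conjugates of an elliptic element satisfying an odd braid relation generate a finite subgroup. That is false here, since $\overline{b_1}$ has infinite order, both in $\Z*\Z/(m_B/2)$ and in the free group.
\item For $m_A,m_B$ both odd, you only assert that $\langle b_1,x\rangle$ is $\langle b_1\rangle$ or a $b_1^t$-conjugate of $B$. The proposed tool, a tree argument in the Coxeter quotient $I_2(m_B)$, is unavailable because that group is finite.
\end{itemize}

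\textbf{How far your even-case device gets.} For odd $m_A$ the relation still gives $(b_1x)^{m_A}=(xb_1)^{m_A}$, so $y=b_1x$ satisfies $y^{m_A}\in\langle b_1\rangle Z(B)$.
\begin{itemize}
\item If $\bar y^{m_A}\neq 1$, the root argument puts $x$ in the abelian group $\langle b_1\rangle Z(B)$. The odd relation between commuting elements then forces $x=b_1$.
\item If $\bar y^{m_A}=1$, then $y=\beta u_B^{s}\beta^{-1}$ for finite $m_B\ge 3$, and $y=1$ when $m_B=\infty$. Since $x$ is conjugate to $b_1$, the abelianisation gives a contradiction when $m_B$ is even or $\infty$. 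When $m_B$ is odd it forces $s=1$, hence $m_B\mid m_A$.
\end{itemize}
But $\beta$ is still arbitrary, and not every $\beta$ works. For $m_A=m_B=3$ and $\beta=b_2^{-1}$ one gets $x=b_2b_1^{-1}b_2$. Then $xb_1x=b_2^3$, while $b_1xb_1=b_2^{-1}b_1b_2^2b_1$, and the two differ because $b_1b_2^2b_1\neq b_2^4$ in the positive monoid.

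\textbf{The missing idea.} This is exactly the step you flagged as the main obstacle. The paper resolves it by also using $\varphi(\Delta_A)=\Pi(b_1,x,m_A)$.
\begin{itemize}
\item Its image in $B/Z(B)=B[2]*B[m_B]$ has order dividing $2$.
\item The abelianisation $z_B$ sends it to $m_A$, which is odd, so the image is nontrivial and equals $\overline{\beta_2}\,\overline{\Delta_B}\,\overline{\beta_2}^{-1}$.
\item Write $m_A=2n_A+1$. The identity $a_1=u_A^{-n_A}\Delta_A$ gives
\[
\overline{b_1}=(\overline{\beta}\,\overline{u_B}^{-n_A}\,\overline{\beta}^{-1})(\overline{\beta_2}\,\overline{\Delta_B}\,\overline{\beta_2}^{-1}),
\]
a product of two nontrivial elliptic elements fixing vertices of different types.
\item By Proposition~\ref{prop: elliptic and hyperbolic elements}(2), the vertex $v(\overline{\beta}B[m_B])$ lies on the axis of $\overline{b_1}$. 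Hence it equals $v(\overline{b_1}^{t}B[m_B])$ for some $t$.
\item This forces $\beta\in b_1^{t}\langle u_B\rangle Z(B)$, and therefore $x=b_1^{t}b_2b_1^{-t}$.
\end{itemize}
Without this axis argument, the odd--odd case, which Lemma~\ref{lemma_3_mbc_divides_odd_labels} depends on, remains unproved.
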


It is straightforward to verify that each of the above possibilities for $\varphi(a_2)$ defines a well-defined homomorphism $\varphi$.
We will prove this theorem in \Cref{subsec: Proof of dihedral_homomorphisms}, using some results on Bass-Serre theory from \Cref{subsec: Bass-Serre}, and a lemma about centralizers in dihedral Artin groups from \Cref{subsec: missing lemma}.

\subsection{Bass-Serre Tree}\label{subsec: Bass-Serre}

\noindent
Let $H$ and $K$ be two groups, and let $G = H \ast K$.
We define a graph $T$ as follows.
For each left coset $gH$ of $H$ with $g \in G$, there is a vertex $v(gH)$ in $T$, and for each left coset $gK$ of $K$ with $g \in G$, there is a vertex $v(gK)$ in $T$.
For each $g \in G$, there is an edge $e(g)$ in $T$ connecting $v(gH)$ to $v(gK)$.
It is known from \citep{Serre} that $T$ is a tree, called the \emph{Bass–Serre tree} (associated with the free product decomposition), and that we have an action of $G$ on $T$ defined by
\[
g \cdot v(hH) = v(ghH), \quad g \cdot v(hK) = v(ghK), \quad g \cdot e(h) = e(gh).
\]

\noindent
We endow $T$ with the usual metric, denoted by $d_T$, where all edges have length $1$.
Let $g \in G$.
The \emph{translation length} of $g$ is the number
\[
|g| = \inf \{ d_T(x, g(x)) \mid x \in V(T) \}.
\]
If $|g| = 0$, then we say that $g$ is \emph{elliptic}, and if $|g| \neq 0$, then we say that $g$ is \emph{hyperbolic}.
If $g \neq 1$ and $g$ is elliptic, then there exists a unique vertex $x \in V(T)$ such that $g(x) = x$ (this holds because the stabilizer of every edge in $T$ is trivial).
If $g$ is hyperbolic, then the set
\[
\Min(g) = \{ x \in V(T) \mid d_T(x, g(x)) = |g| \}
\]
generates a subgraph of $T$ isomorphic to $A_\infty$, called the \emph{axis} of $g$.
The graph $A_\infty$ is illustrated in the figure below.

\begin{figure}[H]
    \centering
\begin{tikzpicture}[scale=1.2, every node/.style={font=\small}]

  \draw (-4,0) -- (4,0);

  \foreach \i in {-3,-2,-1,1,2,3} {
    \draw (\i,0.1) -- (\i,-0.1); 
    \node[below] at (\i,-0.1) {$g^{\i} x$};
  }
  \draw (0,0.1) -- (0,-0.1);
  \node[below] at (0,-0.23) {$x$};
  \node at (-4.3,0) {$\dots$};
  \node at ( 4.3,0) {$\dots$};

  \draw[->,thick,blue] (-0.5,0.5) -- (0.5,0.5) node[midway,above] {$g$};

\end{tikzpicture}
\end{figure}

\noindent
In our proofs, we use the following results.


\begin{proposition}\label{prop: elliptic and hyperbolic elements}
    Let $H, K$ be two groups and let $G = H \ast K$.
\begin{itemize}[itemsep=2pt, parsep=2pt, topsep=2pt]
    \item[(1)] Let $g \in G \setminus \{1\}$.  
    If $g$ has finite order, then $g$ is elliptic.
    \item[(2)] Let $x_1, x_2 \in V(T)$ with $x_1 \neq x_2$, and let $h_1, h_2 \in G$.  
    Suppose $h_1, h_2$ are elliptic, $h_1(x_1) = x_1$, and $h_2(x_2) = x_2$.  
    Then $g = h_1h_2$ is a hyperbolic element of $G$, $|g| = 2 d_T(x_1, x_2)$, and $x_1, x_2$ lie on the axis of $g$.
\end{itemize}
\end{proposition}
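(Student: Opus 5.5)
Both parts are standard facts about groups acting on trees, and I would argue directly with the Bass–Serre tree $T$ of $G = H \ast K$, using only that $T$ is a tree, that $G$ acts without inversions (the vertex set is bipartite, $H$-cosets versus $K$-cosets, and the action preserves the type), and that edge stabilizers are trivial.

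For (1), let $g$ have finite order $n$ and pick any vertex $x$. The orbit $\{x, g x, \dots, g^{n-1}x\}$ is finite, so it spans a finite subtree $T_0$, the union of the geodesics between orbit points, and $g$ preserves $T_0$. Every automorphism of a finite tree fixes its center, which is a vertex or an edge. If it is an edge, then $g$ either swaps its endpoints, which is impossible because the two endpoints have different types, or fixes both endpoints. In every case $g$ fixes a vertex, so $|g| = 0$. A more hands-on alternative: by the normal form in free products, an element of finite order is conjugate into $H$ or $K$, hence fixes $v(\alpha H)$ or $v(\alpha K)$. I would present the tree-center argument.

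For (2), let $[x_1,x_2]$ be the geodesic, with $d = d_T(x_1,x_2) \ge 1$. Let $e_1$ be the edge of $[x_1,x_2]$ at $x_1$ and $e_2$ the edge at $x_2$. The key point is that $h_1$ does not fix $e_1$. Indeed $h_1 \neq 1$, since otherwise it would have no unique fixed point (the claim implicitly needs $h_i \neq 1$, and I would state this as an assumption), and edge stabilizers are trivial. Similarly $h_2 e_2 \neq e_2$. Set $g = h_1 h_2$ and consider the path
\[
P = [h_1 x_2,\, x_1] \cup [x_1, x_2] = h_1[x_2,x_1] \cup [x_1,x_2],
\]
which has length $2d$. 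It is a geodesic because its two halves meet at $x_1$ through the distinct edges $h_1 e_1$ and $e_1$, and in a tree a path without backtracking is geodesic. Since $g x_2 = h_1 h_2 x_2 = h_1 x_2$, this gives $d(x_2, g x_2) = 2d$.

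Next I would check that the translates $g^k P$ concatenate without backtracking. The concatenation $P \cup gP$ passes through $g x_2 = h_1 x_2$. There $P$ arrives along $h_1 e_2$, and $gP$ leaves along $g\bigl(h_1 h_2 \,\text{first edge}\bigr)$, or more simply one sees that $gP$ starts at $g x_2$ with the edge $h_1 h_2 e_2$. These edges differ because $h_2 e_2 \neq e_2$. The union $L = \bigcup_k g^k P$ is therefore a bi-infinite geodesic on which $g$ acts by translation of amplitude $2d$. By the standard criterion (Serre: if an automorphism without inversion has $[x, gx] \cap [gx, g^2x] = \{gx\}$, then it is hyperbolic with axis containing $[x,gx]$), $g$ is hyperbolic, $|g| = 2d$, and $L$ is its axis, which contains both $x_1$ and $x_2$.

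The main obstacle is the bookkeeping of the turning edges at the junctions: at $x_1$ one must use $h_1 e_1 \neq e_1$, and at $g x_2$ one must use $h_2 e_2 \neq e_2$. Both reduce to the triviality of edge stabilizers, together with invoking the axis criterion cleanly.
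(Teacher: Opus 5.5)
The paper gives no proof of this proposition. It is quoted as standard Bass--Serre theory (from Serre's \emph{Trees}) and used as a black box, so there is no route of the paper's to compare yours with.

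Your argument is the standard one, and it is correct.
\begin{itemize}
\item In (1), the convex hull of a finite orbit is a finite $g$-invariant subtree, and $g$ fixes its centre. An edge centre cannot be inverted, because the action preserves the bipartition into $H$-cosets and $K$-cosets.
\item In (2), two non-backtracking checks are needed: $h_1e_1\neq e_1$ at $x_1$, and $h_1e_2\neq h_1h_2e_2=ge_2$ at $gx_2=h_1x_2$. Both follow from the triviality of edge stabilisers.
\end{itemize}

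Your caveat that $h_1,h_2\neq 1$ must be assumed is a genuine correction. With $h_1=1$ the printed statement is false, since then $g=h_2$ is elliptic. The paper tacitly means nontrivial elliptic elements (it remarks that these have a unique fixed vertex), and all of its applications satisfy this.

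Two small remarks.
\begin{itemize}
\item With $P$ oriented as you wrote it, from $h_1x_2$ to $x_2$, the translate $gP$ ends at $gx_2$ rather than starting there. The two edges you compare at that junction are nonetheless the right ones, so nothing changes.
\item The last step can be made self-contained without quoting Serre's criterion. Let $p$ be the projection of a vertex $y$ onto the line $L$; then $gp$ is the projection of $gy$. Since $p\neq gp$, the path through $p$ and $gp$ is a geodesic, so $d_T(y,gy)=2d_T(y,L)+2d_T(x_1,x_2)$. This gives $|g|=2d_T(x_1,x_2)$ and $\Min(g)=V(L)$ at once.
\end{itemize}
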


Recall that if $a, b$ are two letters and $m \in \mathbb{N}_{\ge 2}$, we denote by $\Pi(a, b, m)$ the alternating product $aba \dots$ of length $m$.
In the next lemma we highlight how the parity of the defining integer affects the algebraic structure of a dihedral Artin group, setting the stage for later use of its center and quotient.

\begin{lemma}\label{lem: center and quotient in dihedral case}
Let $C = \langle c_1, c_2 \mid \Pi(c_1, c_2, m_C) = \Pi(c_2, c_1, m_C) \rangle$ be a dihedral Artin group.
\begin{itemize}[itemsep=2pt, parsep=2pt, topsep=2pt]
    \item[(1)] Suppose that $m_C$ is odd.   
    Then $C$ has the presentation $C = \langle \Delta_C, u_C \mid \Delta_C^2 = u_C^{m_C} \rangle$, and the quotient $C/Z(C)$ is the free product $C[2] \ast C[m_C]$, where $C[2]$ is a cyclic group of order $2$ generated by the class $\overline{\Delta_C}$, and $C[m_C]$ is a cyclic group of order $m_C$ generated by the class $\overline{u_C}$.
    
    \item[(2)] Suppose that $m_C$ is even and $m_C \neq 2$.  
    Then $C$ has the presentation $C = \langle c_1, u_C \mid c_1 u_C^{m_C/2} = u_C^{m_C/2} c_1 \rangle$, and the quotient $C/Z(C)$ is the free product $C[\infty] \ast C[m_C/2]$, where $C[\infty]$ is an infinite cyclic group generated by the class $\overline{c_1}$, and $C[m_C/2]$ is a cyclic group of order $m_C/2$ generated by the class $\overline{u_C}$.
\end{itemize}
\end{lemma}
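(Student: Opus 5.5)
For (1) I will use Tietze transformations to pass to the generators $\Delta_C$ and $u_C$, and then read off the quotient by the center directly from the new presentation. Write $m=m_C=2k+1$ and $\Delta=\Delta_C$, $u=u_C=c_1c_2$. The first step is to express $c_1,c_2$ in terms of $\Delta,u$. Since $\Delta=\Pi(c_1,c_2,m)$ has odd length, $\Delta=(c_1c_2)^k c_1=u^kc_1$, hence $c_1=u^{-k}\Delta$ and $c_2=c_1^{-1}u=\Delta^{-1}u^{k+1}$. The next step is to translate the defining relation. The relation $\Pi(c_1,c_2,m)=\Pi(c_2,c_1,m)$ reads $u^kc_1=c_2u^k$, i.e.\ $\Delta=\Delta^{-1}u^{2k+1}$, i.e.\ $\Delta^2=u^m$. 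Conversely, given $\langle \Delta,u\mid \Delta^2=u^m\rangle$, define $c_1,c_2$ by the formulas above. Then $c_1c_2=u$, and the original relation becomes $\Delta^2=u^m$. The two maps are mutually inverse, which gives the presentation. Now $\Delta^2=u^m$ is central, since it commutes with both generators, and by the recalled fact (Brieskorn–Saito) $Z(C)=\langle\Delta^2\rangle$. Quotienting the presentation by $\Delta^2$ adds the relations $\Delta^2=1$ and $u^m=1$, giving $\langle \bar\Delta,\bar u\mid \bar\Delta^2=\bar u^m=1\rangle=C[2]\ast C[m]$.

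For (2) the argument is parallel. Write $m=2k$ and $u=c_1c_2$, so $c_2=c_1^{-1}u$ and $\Delta_C=u^k$. The defining relation $(c_1c_2)^k=(c_2c_1)^k$ becomes $u^k=c_1^{-1}u^kc_1$, i.e.\ $c_1u^k=u^kc_1$. Tietze transformations in both directions give $C=\langle c_1,u\mid c_1u^{k}=u^{k}c_1\rangle$. Here $u^k=\Delta_C$ commutes with $c_1$ and with $u$, and $Z(C)=\langle\Delta_C\rangle=\langle u^k\rangle$. Killing $u^k$ yields $\langle \bar c_1,\bar u\mid \bar u^k=1\rangle=C[\infty]\ast C[m/2]$. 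The hypothesis $m\neq 2$ ensures $k\ge 2$, so the factor $C[m/2]$ is a genuine nontrivial cyclic factor and the decomposition is a nontrivial free product.

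The only point requiring care is the identification of the center, and the plan is to cite \citep{BrieskornSaito}. If one wanted to avoid the citation, in case (1) one would note that $C/\langle\Delta^2\rangle$ is a free product of two nontrivial groups, not both of order $2$, and hence centerless. That forces $Z(C)\subseteq\langle\Delta^2\rangle$. Case (2) is handled the same way, using $k\ge2$.
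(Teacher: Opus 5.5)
Your proof is correct. The paper gives no proof of this lemma. It treats the statement as standard and takes the center from the Brieskorn--Saito result recalled at the start of Section~2. Your Tietze rewriting in the generators $\Delta_C,u_C$ (resp.\ $c_1,u_C$), followed by killing the central element $\Delta_C^2=u_C^{m_C}$ (resp.\ $\Delta_C=u_C^{m_C/2}$), is the standard derivation behind it.

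Your optional argument goes slightly further than the paper: $C/\langle \Delta_C^2\rangle$ (resp.\ $C/\langle \Delta_C\rangle$) is a free product of two nontrivial groups, hence centerless, so $Z(C)$ equals the cyclic subgroup you killed. This makes the lemma self-contained, with no need for the citation.

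Two small precisions. First, the qualifier ``not both of order $2$'' is unnecessary, since any free product of two nontrivial groups, including $\mathbb{Z}/2\ast\mathbb{Z}/2$, has trivial center. Second, the hypothesis $m_C\neq 2$ matters for more than making $C[m_C/2]$ a nontrivial factor. It is needed for the center identification itself: for $m_C=2$ one has $Z(C)=C\cong\mathbb{Z}^2$, so $C/Z(C)$ is trivial rather than $C[\infty]\ast C[1]\cong\mathbb{Z}$. That is exactly where your self-contained version correctly uses $k\ge 2$.
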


\subsection{A Lemma}\label{subsec: missing lemma}

The following result will be used repeatedly in the proof of \autoref{theorem:classification_of_homo}. 
To the best of our knowledge, it is not explicitly stated in the literature, 
but it is a direct consequence of~\cite[Theorem~5.2]{Paris}.

\begin{lemma}\label{lem: missing lemma}
Let 
\[
A = \langle a_1, a_2 \mid \Pi(a_1, a_2, m_A) = \Pi(a_2, a_1, m_A) \rangle
\]
be a dihedral Artin group with $m_A < \infty$. 
Let $k_1, k_2 \in \mathbb{Z}$ with $k_2 \neq 0$. 
Then the centralizer of $\delta_A^{k_1} a_1^{k_2}$ in $A$ is the free abelian group generated by $\delta_A$ and $a_1$.
\end{lemma}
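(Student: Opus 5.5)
We will prove \autoref{lem: missing lemma} by reducing the question to a centralizer computation in the quotient $A/Z(A)$, using the free product structure of \autoref{lem: center and quotient in dihedral case} and the Bass--Serre tree of \Cref{subsec: Bass-Serre}. The inclusion $\langle \delta_A, a_1\rangle \subseteq C_A(\delta_A^{k_1}a_1^{k_2})$ is clear, since $\delta_A$ is central and $a_1$ commutes with itself. The subgroup $\langle\delta_A,a_1\rangle$ is free abelian of rank $2$: the abelianization sends $\delta_A$ and $a_1$ to linearly independent vectors in $\mathbb{Z}$ or $\mathbb{Z}^2$, for instance via the length homomorphism $A\to\mathbb{Z}$ combined with the image of $\bar a_1$ in $A/Z(A)$ having infinite order. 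So the real content is the reverse inclusion. Set $g=\delta_A^{k_1}a_1^{k_2}$ and let $x$ commute with $g$. Since $\delta_A$ is central, $x$ commutes with $a_1^{k_2}$, hence $\bar x$ commutes with $\bar a_1^{k_2}$ in $\bar A = A/Z(A)$. It suffices to show that $\bar x\in\langle \bar a_1\rangle$, because then $x\in a_1^{n}Z(A)=a_1^n\langle\delta_A\rangle$.

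\textbf{Even case} ($m_A\ge4$ even). By \autoref{lem: center and quotient in dihedral case}(2), $\bar A = C[\infty]\ast C[m_A/2]$ with $\bar a_1$ generating the infinite factor $H$. The element $h=\bar a_1^{k_2}\neq 1$ is elliptic and fixes the unique vertex $v(H)$, since edge stabilizers are trivial. If $\bar x$ commutes with $h$, then $h$ fixes $\bar x\cdot v(H)$, so by uniqueness $\bar x v(H)=v(H)$. Hence $\bar x\in H=\langle\bar a_1\rangle$.

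\textbf{Odd case.} By \autoref{lem: center and quotient in dihedral case}(1), $\bar A=C[2]\ast C[m_A]$, generated by $\bar\Delta_A$ and $\bar u_A$. Here $a_1 = \Delta_A\,\Pi(a_2,a_1,m_A-1)^{-1}$. More usefully, $\bar a_1$ is an element of $\bar A$ of infinite order, since $a_1$ has infinite order modulo $\langle\Delta_A^2\rangle$ by the length homomorphism. Hence $\bar a_1$ is hyperbolic, writing it as a product of two elliptic elements fixing distinct vertices as in \autoref{prop: elliptic and hyperbolic elements}(2). Concretely, $\bar a_1=\bar\Delta_A\cdot \overline{\Delta_A^{-1}a_1}$, and $\Delta_A^{-1}a_1$ is conjugate to a power of $u_A^{-1}$, up to center. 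Likewise $\bar a_1^{k_2}$ is hyperbolic with the same axis $L$. If $\bar x$ commutes with $\bar a_1^{k_2}$, then $\bar x$ preserves $L$.

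The main obstacle is then to identify the stabilizer of $L$ and show it is $\langle\bar a_1\rangle$. The stabilizer of $L$ acts on the line $L$ by translations or reflections. Elements acting trivially on $L$ fix edges and so are trivial. Hence the stabilizer embeds in $D_\infty$, and the translation subgroup is infinite cyclic containing $\bar a_1$ with translation length $|\bar a_1|=2$ (the two fixed vertices being adjacent). The translation subgroup is therefore generated either by $\bar a_1$ or by an element of translation length $1$. An element of length $1$ is impossible, since translation lengths in this bipartite tree are even. Reflections in $L$ are elliptic elements whose inversion of $\bar a_1^{k_2}$ would be incompatible with commuting with it, since $k_2\neq0$. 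So $\bar x\in\langle\bar a_1\rangle$.

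As a fallback for this step, one can instead invoke \cite[Theorem~5.2]{Paris} directly, which describes centralizers in these groups.
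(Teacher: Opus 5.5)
Your proof is correct, and it takes a genuinely different route from the paper's.

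\textbf{The paper's route.} The paper never passes to $A/Z(A)$. It quotes \cite[Theorem~5.2]{Paris}, which writes any $\beta$ commuting with $a_1^{k_2}$ as a product of three powers: a power of $\Delta_A^{-2}$, a power of a ribbon-type element ($\Delta_Aa_1^{-1}$ in the even case, $\Delta_A^2a_1^{-2}$ in the odd case), and a power of $a_1$. A one-line computation then puts $\beta$ in $\langle\delta_A,a_1\rangle$.

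\textbf{Your route.} You instead compute the centralizer of $\bar a_1^{k_2}$ in the free product $A/Z(A)$ of \autoref{lem: center and quotient in dihedral case}, acting on its Bass--Serre tree. In the even case you use the uniqueness of the fixed vertex of a nontrivial elliptic element. In the odd case you use three facts:
\begin{itemize}
\item the stabilizer of the common axis embeds in $D_\infty$, because edge stabilizers are trivial;
\item its translation subgroup is $\langle\bar a_1\rangle$, because translation lengths in this type-preserving bipartite action are even;
\item reflections would invert $\bar a_1^{k_2}$, so they cannot commute with it.
\end{itemize}

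\textbf{Comparison.} The paper's argument is shorter but rests on an external structural theorem about centralizers in spherical-type Artin groups. Yours is self-contained and uses only the tools already set up in \autoref{subsec: Bass-Serre}. It also handles the arbitrary power $k_2\neq0$ explicitly, rather than through the cited result.

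\textbf{One justification needs repairing.} In the odd case, the length homomorphism does not show that $\bar a_1$ has infinite order, since length is compatible with a relation $a_1^{2km_A}=\Delta_A^{2k}$. For the same reason, ``linearly independent in $\mathbb{Z}$'' is meaningless when the abelianization is $\mathbb{Z}$. What actually proves infinite order is your concrete decomposition, equivalently $a_1=u_A^{-n}\Delta_A$ with $m_A=2n+1$. Here $\bar u_A^{-n}$ and $\bar\Delta_A$ are nontrivial elliptic elements fixing the adjacent vertices $v(C[m_A])$ and $v(C[2])$. So \autoref{prop: elliptic and hyperbolic elements}(2) gives $|\bar a_1|=2$. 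You should derive the hyperbolicity of $\bar a_1$ from this, and with it the infinite order needed for the rank-$2$ claim.

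Your even case assumes $m_A\ge 4$. This is consistent with the paper's standing convention $m_A\ge3$, and for $m_A=2$ the statement is trivial anyway.
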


\begin{proof}
Let $Z$ be the centralizer of $\delta_A^{k_1} a_1^{k_2}$ in $A$. 
It is clear that $\delta_A$ and $a_1$ belong to $Z$, and it is also clear that 
$\delta_A$ and $a_1$ generate a free abelian group of rank $2$. 
Thus it suffices to prove that $Z \subseteq \langle \delta_A, a_1 \rangle$.

Since the center of $A$ is the cyclic group generated by $\delta_A$, 
it follows that $Z$ is also the centralizer of $a_1^{k_2}$ in $A$. Let $\beta \in Z$.

\medskip

\noindent
\textbf{Case 1: $m_A$ is even.}
Let $\mu_A = \Delta_A a_1^{-1} = \delta_A a_1^{-1}$. 
By~\cite[Theorem~5.2]{Paris}, there exist $t_1, t_2 \ge 0$ and $t_3 \in \mathbb{Z}$ such that
\[
\beta = \Delta_A^{-2t_1} \mu_A^{t_2} a_1^{t_3}.
\]
Hence
\[
\beta 
= \delta_A^{-2t_1 + t_2} a_1^{\,t_3 - t_2}
\in \langle \delta_A, a_1 \rangle.
\]

\medskip

\noindent
\textbf{Case 2: $m_A$ is odd.}
Let $\mu_{1,A} = \Delta_A a_1^{-1}$ and 
$\mu_{2,A} = \Delta_A a_2^{-1}$. 
Note that
\[
\mu_{2,A}\mu_{1,A}
= \Delta_A^2 a_1^{-2}
= \delta_A a_1^{-2}.
\]
By~\cite[Theorem~5.2]{Paris}, there exist $t_1, t_2 \ge 0$ and $t_3 \in \mathbb{Z}$ such that
\[
\beta 
= \Delta_A^{-2t_1} (\mu_{2,A}\mu_{1,A})^{t_2} a_1^{t_3}.
\]
Hence
\[
\beta 
= \delta_A^{-t_1 + t_2} a_1^{\,t_3 - 2t_2}
\in \langle \delta_A, a_1 \rangle.
\]

Therefore $Z \subseteq \langle \delta_A, a_1 \rangle$, 
and the proof is complete.
\end{proof}

\subsection{Case Analysis - Proof of \autoref{theorem:classification_of_homo}}\label{subsec: Proof of dihedral_homomorphisms}

\bigskip\noindent
We consider two Artin groups of dihedral type:
\begin{align*}
A & = \langle a_1, a_2 \mid \Pi(a_1, a_2, m_A) = \Pi(a_2, a_1, m_A) \rangle, \\
B & = \langle b_1, b_2 \mid \Pi(b_1, b_2, m_B) = \Pi(b_2, b_1, m_B) \rangle,
\end{align*}
and a homomorphism $\varphi : A \to B$ that sends $a_1$ to $b_1$.  
We will study the possibilities for such a homomorphism depending on the values of $m_A$ and $m_B$.

\bigskip\noindent
\textbf{Case 1:}  
\textit{$m_A = \infty$.  
Then $\varphi(a_2)$ can take any value in $B$.}

\bigskip\noindent
\textbf{Proof.}  
In this case, $A$ is a free group of rank $2$, freely generated by $\{a_1, a_2\}$.  
\qed

\bigskip\noindent
\textbf{Case 2.1:}  
\textit{$m_A$ is odd and $m_B = \infty$.  
Then $\varphi(a_2) = b_1$.}

\bigskip\noindent
\textbf{Proof.}  
Recall that the center of $A$ is an infinite cyclic group generated by $\delta_A = u_A^{m_A} = \Delta_A^2$.  
The element $\delta_A$ commutes with $a_1$, so $\varphi(\delta_A)$ commutes with $\varphi(a_1) = b_1$.  
Since $B$ is a free group freely generated by $\{b_1, b_2\}$, the centralizer of $b_1$ in $B$ is $\langle b_1 \rangle$, hence there exists $k \in \mathbb{Z}$ such that $\varphi(\delta_A) = b_1^k$.  
Suppose first that $k = 0$.  
We have
\[
\varphi(a_1a_2)^{m_A} = \varphi((a_1 a_2)^{m_A}) = \varphi(\delta_A) = 1.
\]
Since $B$ is torsion-free, it follows that $\varphi(a_1 a_2) = 1$, hence $\varphi(a_2) = \varphi(a_1)^{-1} = b_1^{-1}$.  
Applying $\varphi$ to the identity $\Pi(a_1, a_2, m_A) = \Pi(a_2, a_1, m_A)$ gives $\Pi(b_1, b_1^{-1}, m_A) = \Pi(b_1^{-1}, b_1, m_A)$.  
Since $m_A$ is odd, this implies $b_1 = b_1^{-1}$, a contradiction.  

Thus, $k \ne 0$.  
The centralizer of $\varphi(\delta_A) = b_1^k$ is $\langle b_1 \rangle$, and $\varphi(a_2)$ commutes with $\varphi(\delta_A)$, so there exists $\ell \in \mathbb{Z}$ such that $\varphi(a_2) = b_1^\ell$.  
Applying $\varphi$ to the identity $\Pi(a_1, a_2, m_A) = \Pi(a_2, a_1, m_A)$ gives $\Pi(b_1, b_1^\ell, m_A) = \Pi(b_1^\ell, b_1, m_A)$.  
Since $m_A$ is odd, this implies $b_1 = b_1^\ell$, so $\ell = 1$, as desired.  
\qed

\bigskip\noindent
{\bf Case 2.2:}
{\it Suppose that $m_A$ is odd, $m_B$ is even, and $m_B \neq 2$.
Then $\varphi (a_2) = b_1$.}

\bigskip\noindent
{\bf Proof.}
The element $\varphi (\delta_A)$ commutes with $\varphi (a_1) = b_1$, and the centralizer of $b_1$ in $B$ is generated by $\delta_B$ and $b_1$ (see \autoref{lem: missing lemma}). 
Hence there exist $k_1, k_2 \in \mathbb{Z}$ such that
\[
\varphi (\delta_A) = \delta_B^{k_1} b_1^{k_2}.
\]


Suppose $k_2 \neq 0$.  
The element $\varphi(a_2)$ commutes with $\varphi(\delta_A) = \delta_B^{k_1} b_1^{k_2}$, and the centralizer of $\delta_B^{k_1} b_1^{k_2}$ in $B$ is generated by $\delta_B$ and $b_1$ by \autoref{lem: missing lemma}, so there exist $\ell_1, \ell_2 \in \mathbb{Z}$ such that $\varphi(a_2) = \delta_B^{\ell_1} b_1^{\ell_2}$.  
Applying $\varphi$ to the identity $\Pi(a_1, a_2, m_A) = \Pi(a_2, a_1, m_A)$ gives $\Pi(b_1, \delta_B^{\ell_1} b_1^{\ell_2}, m_A) = \Pi(\delta_B^{\ell_1} b_1^{\ell_2}, b_1, m_A)$.  
Since $m_A$ is odd and $b_1$ commutes with $\delta_B^{\ell_1} b_1^{\ell_2}$, this implies $b_1 = \delta_B^{\ell_1} b_1^{\ell_2}$. This gives $\ell_1 = 0$ and $\ell_2 = 1$, so $\varphi(a_2) = b_1$.


Thus we may assume $k_2 = 0$.
In this situation,
\[
\varphi (\delta_A) = \delta_B^{k_1} \in Z(B),
\]
so $\varphi$ induces a homomorphism
\[
\bar \varphi : A / Z(A) \;\cong\; A[2] \ast A[m_A] \;\longrightarrow\; B / Z(B) \;\cong\; B[\infty] \ast B[m_B/2].
\]
We compute
\[
\bar \varphi (\overline{u_A})^{m_A} = \bar \varphi (\overline{u_A}^{\,m_A}) = 1,
\]
and by \autoref{prop: elliptic and hyperbolic elements} it follows that $\bar \varphi (\overline{u_A})$ belongs to a conjugate of $B[m_B/2]$ in $B/Z(B)$.
Hence there exist $\beta \in B$ and $t_1, t_2 \in \mathbb{Z}$ such that
\[
\varphi (u_A) = \beta u_B^{t_1} \beta^{-1} \delta_B^{t_2} 
              = \beta u_B^{t_1+t_2 m_B/2} \beta^{-1}.
\]

Let $z_B : B \to \mathbb{Z} \times \mathbb{Z}$ be the homomorphism defined by $z_B(b_1) = (1,0)$ and $z_B(b_2) = (0,1)$.
Since $\varphi(a_1) = b_1$, we have $z_B (\varphi (a_1)) = (1,0)$.
Moreover, because $a_2$ is conjugate to $a_1$ in $A$, $\varphi(a_2)$ is conjugate to $b_1$ in $B$, and therefore $z_B (\varphi(a_2)) = (1,0)$ as well.
Thus
\[
z_B (\varphi (u_A)) 
   = z_B (\varphi (a_1)) + z_B (\varphi (a_2)) 
   = (2,0).
\]

On the other hand,
\[
z_B \!\left(\beta u_B^{\,t_1+t_2 m_B/2} \beta^{-1}\right) 
   = (\,t_1+t_2 m_B/2,\, t_1+t_2 m_B/2\,),
\]
which is never equal to $(2,0)$.
This contradicts the equality 
$\varphi (u_A) = \beta u_B^{t_1+t_2 m_B/2} \beta^{-1}$.
Hence the assumption $k_2 = 0$ is impossible.
\qed

\bigskip\noindent
{\bf Case 2.3:}
{\it Suppose that $m_A$ is odd and $m_B = 2$.
Then $\varphi(a_2) = b_1$.}

\bigskip\noindent
{\bf Proof.}
Applying $\varphi$ to the defining relation
\[
\Pi(a_1, a_2, m_A) \;=\; \Pi(a_2, a_1, m_A)
\]
yields
\[
\Pi(b_1, \varphi(a_2), m_A) \;=\; \Pi(\varphi(a_2), b_1, m_A).
\]
Since $B$ is abelian and $m_A$ is odd, this equality forces $\varphi(a_2) = b_1$.
\qed

\bigskip\noindent
\textbf{Case 3:}  
\textit{$m_A$ and $m_B$ are both odd.  
One has the following alternatives:}
\begin{itemize}[itemsep=2pt, parsep=2pt, topsep=2pt]
    \item $\varphi(a_2) = b_1$,
    \item \emph{or $m_B$ divides $m_A$ and there exists $t \in \mathbb{Z}$ such that $\varphi(a_2) = b_1^t b_2 b_1^{-t}$.}
\end{itemize}

\bigskip\noindent
\textbf{Proof.}  
The element $\varphi(\delta_A)$ commutes with $\varphi(a_1) = b_1$, and the centralizer of $b_1$ in $B$ is generated by $b_1$ and $\delta_B$, so there exist $k_1, k_2 \in \mathbb{Z}$ such that $\varphi(\delta_A) = \delta_B^{k_1} b_1^{k_2}$.  
From here, the proof splits into two cases depending on whether $k_2 \neq 0$ or $k_2 = 0$. 

\bigskip\noindent
Suppose $k_2 \neq 0$. This case is analogous to the case $k_2 \neq 0$ in Case 2.2.

The element $\varphi(a_2)$ commutes with $\varphi(\delta_A) = \delta_B^{k_1} b_1^{k_2}$, and the centralizer of $\delta_B^{k_1} b_1^{k_2}$ in $B$ is generated by $\delta_B$ and $b_1$, so there exist $\ell_1, \ell_2 \in \mathbb{Z}$ such that $\varphi(a_2) = \delta_B^{\ell_1} b_1^{\ell_2}$.  
Applying $\varphi$ to the identity $\Pi(a_1, a_2, m_A) = \Pi(a_2, a_1, m_A)$ gives $\Pi(b_1, \delta_B^{\ell_1} b_1^{\ell_2}, m_A) = \Pi(\delta_B^{\ell_1} b_1^{\ell_2}, b_1, m_A)$.  
Since $m_A$ is odd and $b_1$ commutes with $\delta_B^{\ell_1} b_1^{\ell_2}$, this implies $b_1 = \delta_B^{\ell_1} b_1^{\ell_2}$. This gives $\ell_1 = 0$ and $\ell_2 = 1$, so $\varphi(a_2) = b_1$.

\bigskip\noindent
Suppose $k_2 = 0$.  
Since $\varphi(\delta_A) = \delta_B^{k_1} \in Z(B)$, $\varphi$ maps $Z(A)$ to $Z(B)$, so $\varphi$ induces a homomorphism
\[
\bar \varphi : A/Z(A) = A[2] \ast A[m_A] \;\longrightarrow\; B/Z(B) = B[2] \ast B[m_B].
\]
For $u_A = a_1a_2$, we have $\bar \varphi(\overline{u_A})^{m_A} = \bar \varphi(\overline{u_A}^{m_A}) = 1$, so by \autoref{prop: elliptic and hyperbolic elements}, $\bar \varphi(\overline{u_A})$ is elliptic.  
This means $\bar \varphi(\overline{u_A})$ belongs to a conjugate of $B[2]$ or of $B[m_B]$.  
But since $m_A$ is odd, $\bar \varphi(\overline{u_A})$ cannot belong to a conjugate of $B[2]$ unless it is trivial. If $\bar \varphi(\overline{u_A})$ is trivial, one has $\varphi(u_A) = \delta_B^{t}$, for some $t \in \Z$, which gives $\varphi(a_2) = \delta_B^{t}b_1^{-1}$. Again, applying $\varphi$ to the identity $\Pi (a_1, a_2, m_A) = \Pi (a_2, a_1, m_A)$ gives $\Pi (b_1, \delta_B^{t} b_1^{-1}, m_A) = \Pi (\delta_B^{t} b_1^{-1}, b_1, m_A)$. 
Since $m_A$ is odd and $b_1$ commutes with $\delta_B^{t} b_1^{-1}$, this implies $b_1 =  \delta_B^{t} b_1^{-1}$.
Such an equality would imply that the centralizer of $b_1^2 = \delta_B^t$ is the whole group $B$, contradicting \autoref{lem: missing lemma}.
So $\bar \varphi(\overline{u_A})$ belongs to a conjugate of $B[m_B]$.  
Thus, there exist $\beta_1 \in B$ and $t_1, t_2 \in \mathbb{Z}$ such that
\[
\varphi(u_A) = \beta_1 u_B^{t_1} \beta_1^{-1} \delta_B^{t_2} = \beta_1 u_B^{t_1 + m_B t_2} \beta_1^{-1}.
\]

Let $z_B : B \to \mathbb{Z}$ be the homomorphism sending $b_1$ and $b_2$ to $1$.  
We have $z_B(\varphi(a_1)) = z_B(b_1) = 1$.  
Since $a_2$ is conjugate to $a_1$ in $A$, $\varphi(a_2)$ is conjugate to $\varphi(a_1) = b_1$ in $B$, so $z_B(\varphi(a_2)) = 1$.  
It follows that
\[
z_B(\varphi(u_A)) = z_B(\varphi(a_1)) + z_B(\varphi(a_2)) = 2.
\]
On the other hand,
\[
z_B(\beta_1 u_B^{t_1 + m_B t_2} \beta_1^{-1}) = 2(t_1 + m_B t_2).
\]
Hence $t_1 + m_B t_2 = 1$, implying $\varphi(u_A) = \beta_1 u_B \beta_1^{-1}$.  
Since $\overline{u_B}$ has order $m_B$ and $\bar \varphi(\overline{u_A})^{m_A} = 1$, this implies that $m_B$ divides $m_A$.  

One has $\bar \varphi(\overline{\Delta_A})^{2} = \bar \varphi(\overline{\Delta_A}^{2}) = 1$, so by \autoref{prop: elliptic and hyperbolic elements}, $\bar \varphi(\overline{\Delta_A})$ is elliptic. This means $\bar \varphi(\overline{\Delta_A})$ belongs to a conjugate of $B[2]$ or of $B[m_B]$.  Since $m_B$ is odd, one has that $\bar \varphi(\overline{\Delta_A})$ belongs to a conjugate of $B[m_B]$ only if $\bar \varphi(\overline{\Delta_A})$ is trivial, in which case $\varphi(\Delta_A) = \delta_B^t = \Delta_B^{2t}$ for some integer $t$. On the other hand, if $\bar \varphi(\overline{\Delta_A})$ belongs to a conjugate of $B[2]$, 
there exists $\beta_2 \in B$ such that $\varphi(\Delta_A) = \beta_2 \Delta_B^{t_1+2t_2}\beta_2^{-1}$ for some integers $t_1, t_2$.

Combining these two cases, one has
$\varphi(\Delta_A) = \beta_2 \Delta_B^{t}\beta_2^{-1}$ for some $\beta_2 \in B$, and some $t\in \Z$. Using the homomorphism $z_B$ we obtain that $m_A=m_B\cdot t$ so we have that $\varphi(\Delta_A) = \beta_2 \Delta_B^{m_A/m_B}\beta_2^{-1}$. Since both $m_A$ and $m_B$ are odd, $\frac{m_A}{m_B}$ is odd and    
$\overline{\varphi}(\overline{\Delta_A}) = \overline{\beta_2} \,\overline{\Delta_B}\, \overline{\beta_2}^{-1}$.

Let $n_A$ and $n_B$ be the positive integers such that $m_A = 2n_A + 1$ and $m_B = 2n_B + 1$.  
Consider the action of $B/Z(B) = B[2] \ast B[m_B]$ on its Bass–Serre tree $T$.  
We have $\overline{b_1} = (\overline{u_B})^{-n_B} \overline{\Delta_B}$, so by \autoref{prop: elliptic and hyperbolic elements}, $\overline{b_1}$ is hyperbolic, $|\overline{b_1}| = 2$, and the set of vertices of $T$ lying on the axis of $\overline{b_1}$ is
\[
\{ v(\overline{b_1}^t B[2]) \mid t \in \mathbb{Z} \} \;\cup\; \{ v(\overline{b_1}^t B[m_B]) \mid t \in \mathbb{Z} \}.
\]
Also,
\[
\overline{b_1} = \bar \varphi(\overline{a_1}) = \bar \varphi(\overline{u_A})^{-n_A} \, \bar \varphi(\overline{\Delta_A}) 
= (\overline{\beta_1} \,\overline{u_B}^{-n_A}\, \overline{\beta_1}^{-1})(\overline{\beta_2}\,\overline{\Delta_B}\,\overline{\beta_2}^{-1}),
\]
so, by \autoref{prop: elliptic and hyperbolic elements}, $v(\overline{\beta_1} B[m_B])$ lies on the axis of $\overline{b_1}$.  
It follows that there exists $t \in \mathbb{Z}$ such that $\overline{\beta_1} B[m_B] = \overline{b_1}^t B[m_B]$.  
This implies $\beta_1$ is of the form $\beta_1 = b_1^t u_B^{s_1} \delta_B^{s_2}$ with $s_1, s_2 \in \mathbb{Z}$, hence
\[
\varphi(u_A) = \beta_1 u_B \beta_1^{-1} = b_1^t u_B b_1^{-t}.
\]
We conclude that
\[
\varphi(a_2) = \varphi(a_1)^{-1}\varphi(u_A) = b_1^t b_2 b_1^{-t}.
\]
\qed

\bigskip\noindent
{\bf Case 4.1:}
{\it $m_A$ is even, $m_A \neq 2$, and $m_B = \infty$.
Then there exists $t \in \Z$ such that $\varphi (a_2) = b_1^t$.}

\bigskip\noindent
{\bf Proof.}
The element $\varphi(\delta_A)$ commutes with $\varphi(a_1) = b_1$. 
Since $B$ is a free group freely generated by $\{b_1, b_2\}$, the centralizer of $b_1$ in $B$ is $\langle b_1 \rangle$. 
Hence, there exists $k \in \Z$ such that $\varphi(\delta_A) = b_1^k$.
If $k = 0$, then
\[
\varphi(u_A)^{m_A/2} = \varphi(u_A^{m_A/2}) = \varphi(\delta_A) = 1.
\]
Since $B$ is torsion-free, this implies $\varphi(u_A) = \varphi(a_1 a_2) = 1$, and thus
\[
\varphi(a_2) = \varphi(a_1)^{-1} = b_1^{-1}.
\]

If $k \neq 0$, then the centralizer of $\varphi(\delta_A) = b_1^k$ in $B$ is $\langle b_1 \rangle$, and $\varphi(a_2)$ commutes with $\varphi(\delta_A)$. 
Hence, there exists $t \in \Z$ such that $\varphi(a_2) = b_1^t$.
\qed

\bigskip\noindent
{\bf Case 4.2:}  
{\it $m_A = 2$ and $m_B = \infty$.  
Then there exists $t \in \Z$ such that $\varphi(a_2) = b_1^t$.}

\bigskip\noindent
{\bf Proof.}  
The centralizer of $b_1$ in $B$ is $\langle b_1 \rangle$, and since $\varphi(a_2)$ commutes with $\varphi(a_1) = b_1$, there exists $t \in \Z$ such that
$\varphi(a_2) = b_1^t$.
\qed

\bigskip\noindent
{\bf Case 5.1:}
{\it $m_A$ is even, $m_A \neq 2$, and $m_B$ is odd.
Then we have the following alternatives:
\begin{itemize}[itemsep=2pt,parsep=2pt,topsep=2pt]
\item
There exist $t_1, t_2 \in \Z$ such that $\varphi(a_2) = \delta_B^{t_1} b_1^{t_2}$,
\item
$m_A$ is divisible by $4$ and there exist $\beta \in B$ and $t \in \Z$ such that 
$\varphi(a_2) = b_1^{-1} \beta \Delta_B^t \beta^{-1}$,
\item
There exist $\beta \in B$ and $t \in \Z$ such that $\varphi(a_2) = b_1^{-1} \beta u_B^t \beta^{-1}$, and, if $p = \gcd(t, m_B)$ and $q = m_B/p$, then $q$ divides $m_A/2$.
\end{itemize}}

\bigskip\noindent
{\bf Proof.}
Note that these cases are not necessarily disjoint. We begin by verifying that each defines a homomorphism from $A$ to $B$.  

If there exist $t_1, t_2 \in \Z$ such that $\varphi(a_2) = \delta_B^{t_1} b_1^{t_2}$, then $\varphi(a_1)$ and $\varphi(a_2)$ commute, and since $m_A$ is even, one has: 
\[
\Pi(\varphi(a_1), \varphi(a_2), m_A) = \Pi(\varphi(a_2), \varphi(a_1), m_A).
\]  

Suppose $m_A$ is divisible by $4$ and there exist $\beta \in B$ and $t \in \Z$ such that $\varphi(a_2) = b_1^{-1} \beta \Delta_B^t \beta^{-1}$. Let $n_A = m_A/4$. Then
\begin{align*}
\Pi(\varphi(a_1), \varphi(a_2), m_A) 
&= (\varphi(a_1) \varphi(a_2))^{2 n_A} \\
&= (b_1 b_1^{-1} \beta \Delta_B^t \beta^{-1})^{2 n_A} \\
&= (\beta \Delta_B^t \beta^{-1})^{2 n_A} = \beta \Delta_B^{2 t n_A} \beta^{-1} \\
&= \beta \delta_B^{t n_A} \beta^{-1} = \delta_B^{t n_A} \in Z(B),
\end{align*}
showing it lies in the center of $B$.  
This clearly implies:
\begin{align*}
\Pi(\varphi(a_2), \varphi(a_1), m_A) = \delta_B^{t n_A}.
\end{align*}

Suppose there exist $\beta \in B$ and $t \in \Z$ such that $\varphi(a_2) = b_1^{-1} \beta u_B^t \beta^{-1}$ and, if $p = \gcd(t, m_B)$, then $q = m_B/p$ divides $m_A/2$. Let $q_1, q_2 \in \Z$ be such that $t = p q_1$ and $m_A/2 = q q_2$. Then
\begin{align*}
\Pi(\varphi(a_1), \varphi(a_2), m_A) 
&= (b_1 b_1^{-1} \beta u_B^t \beta^{-1})^{m_A/2} \\
&= \beta u_B^{t (m_A/2)} \beta^{-1} = \beta u_B^{p q_1 q q_2} \beta^{-1} \\
&= \beta u_B^{m_B q_1 q_2} \beta^{-1} = \beta \delta_B^{q_1 q_2} \beta^{-1} = \delta_B^{q_1 q_2} \in Z(B).
\end{align*}
As in the previous paragraph, this implies that $\Pi (\varphi (a_1), \varphi (a_2), m_A) = \Pi (\varphi (a_2), \varphi (a_1), m_A)$.

\bigskip\noindent
Now suppose $\varphi : A \to B$ is a homomorphism with $\varphi(a_1) = b_1$. The element $\varphi(\delta_A)$ commutes with $b_1$, and the centralizer of $b_1$ in $B$ is generated by $\delta_B$ and $b_1$, so there exist $k_1, k_2 \in \Z$ such that 
\[
\varphi(\delta_A) = \delta_B^{k_1} b_1^{k_2}.
\]

If $k_2 \neq 0$, then the centralizer of $\varphi(\delta_A)$ is again generated by $\delta_B$ and $b_1$, so there exist $t_1, t_2 \in \Z$ such that $\varphi(a_2) = \delta_B^{t_1} b_1^{t_2}$.  

If $k_2 = 0$, then $\varphi(\delta_A) = \delta_B^{k_1} \in Z(B)$, and $\varphi$ induces a homomorphism 
\[
\bar \varphi : A/Z(A) = A[\infty] \ast A[m_A/2] \to B/Z(B) = B[2] \ast B[m_B].
\]  
We have 
\[
\bar \varphi(\overline{u_A})^{m_A} = \bar \varphi(\overline{u_A}^{m_A}) = \bar \varphi(\overline{\delta_A}) = 1.
\]  
By \autoref{prop: elliptic and hyperbolic elements}, $\bar \varphi(\overline{u_A})$ lies in a conjugate of $B[2]$ or a conjugate of $B[m_B]$.  

If $\bar \varphi(\overline{u_A}) = 1$, then there exists $t_1 \in \Z$ such that $\varphi(u_A) = \delta_B^{t_1}$, hence 
\[
\varphi(a_2) = \varphi(a_1)^{-1} \varphi(u_A) = \delta_B^{t_1} b_1^{-1}.
\]  

If $\bar \varphi(\overline{u_A}) \neq 1$ and lies in a conjugate of $B[2]$, then there exists $\beta \in B$ such that $\bar \varphi(\overline{u_A}) = \bar \beta \overline{\Delta_B} \bar \beta^{-1}$. Since $\bar \beta \overline{\Delta_B}^{m_A/2} \bar \beta^{-1} = 1$ and $\overline{\Delta_B}$ has order $2$, we conclude that $m_A$ is divisible by $4$. There exists $t_1 \in \Z$ such that 
\[
\varphi(u_A) = \beta \Delta_B \beta^{-1} \delta_B^{t_1} = \beta \Delta_B^{1 + 2 t_1} \beta^{-1},
\]  
and thus
\[
\varphi(a_2) = b_1^{-1} \beta \Delta_B^{1 + 2 t_1} \beta^{-1}.
\]

If $\bar \varphi(\overline{u_A}) \neq 1$ and lies in a conjugate of $B[m_B]$, then there exist $\beta \in B$ and $t_1 \in \Z$ such that 
\[
\bar \varphi(\overline{u_A}) = \bar \beta \overline{u_B}^{t_1} \bar \beta^{-1}, \quad 1 \le t_1 \le m_B - 1.
\]  
Let $p = \gcd(t_1, m_B)$ and $q = m_B/p$. Since the order of $\bar \varphi(\overline{u_A})$ divides $m_A/2$, $q$ divides $m_A/2$. There exists $t_2 \in \Z$ such that 
\[
\varphi(u_A) = \beta u_B^{t_1} \beta^{-1} \delta_B^{t_2} = \beta u_B^{t_1 + t_2 m_B} \beta^{-1},
\]  
and hence
\[
\varphi(a_2) = b_1^{-1} \beta u_B^{t_1 + t_2 m_B} \beta^{-1},
\]  
noting that $\gcd(t_1 + t_2 m_B, m_B) = p = \gcd(t_1, m_B)$.
\qed

\bigskip\noindent
{\bf Case 5.2:}  
{\it $m_A = 2$ and $m_B$ is odd.
Then there exist $t_1, t_2 \in \Z$ such that $\varphi(a_2) = \delta_B^{t_1} b_1^{t_2}$.}

\bigskip\noindent
{\bf Proof.}  
The centralizer of $b_1$ in $B$ is $\langle \delta_B, b_1 \rangle$. Since $\varphi(a_2)$ commutes with $\varphi(a_1) = b_1$, there exist $t_1, t_2 \in \Z$ such that
\[
\varphi(a_2) = \delta_B^{t_1} b_1^{t_2}.
\]  
\qed

\bigskip\noindent
{\bf Case 6.1:}  
{\it $m_A$ and $m_B$ are even, with $m_A \neq 2$ and $m_B \neq 2$. Then one of the following alternatives holds:
\begin{itemize}[itemsep=2pt,parsep=2pt,topsep=2pt]
\item
There exist integers $t_1, t_2 \in \Z$ such that $\varphi(a_2) = \delta_B^{t_1} b_1^{t_2}$.
\item
There exist $\beta \in B$ and $t_1 \in \Z$ such that $\varphi(a_2) = b_1^{-1} \beta u_B^{t_1} \beta^{-1}$, and, if we set $p = \gcd(t_1, m_B/2)$ and $q = \frac{m_B}{2p}$, then $q$ divides $m_A/2$.
\end{itemize}}

\bigskip\noindent
{\bf Proof.}  
As in Case 5.1, one can check that a homomorphism of either type above is well-defined.  
Consider a homomorphism $\varphi: A \to B$ with $\varphi(a_1) = b_1$, and let us show that it satisfies one of the two alternatives.  

Since $\varphi(\delta_A)$ commutes with $\varphi(a_1) = b_1$, and the centralizer of $b_1$ in $B$ is generated by $\delta_B$ and $b_1$, there exist integers $k_1, k_2 \in \Z$ such that 
\[
\varphi(\delta_A) = \delta_B^{k_1} b_1^{k_2}.
\]

\medskip
\noindent
If $k_2 \neq 0$, the centralizer of $\varphi(\delta_A)$ in $B$ is generated by $\delta_B$ and $b_1$. Since $\varphi(a_2)$ commutes with $\varphi(\delta_A)$, there exist integers $t_1, t_2 \in \Z$ such that 
\[
\varphi(a_2) = \delta_B^{t_1} b_1^{t_2}.
\]  
This gives the first type of homomorphism.

\medskip
\noindent
Now for $k_2 = 0$, one has $\varphi(\delta_A) = \delta_B^{k_1} \in Z(B)$, so $\varphi$ induces a homomorphism
\[
\bar \varphi : A/Z(A) = A[\infty] \ast A[m_A/2] \longrightarrow B/Z(B) = B[\infty] \ast B[m_B/2].
\]  
Since
\[
\bar \varphi(\overline{u_A})^{m_A/2} = \bar \varphi(\overline{u_A}^{m_A/2}) = \bar \varphi(\overline{\delta_A}) = 1,
\]  
\autoref{prop: elliptic and hyperbolic elements} implies that $\bar \varphi(\overline{u_A})$ lies in a conjugate of $B[m_B/2]$. Hence, there exist $\beta \in B$ and $t_1 \in \{0, 1, \ldots, m_B/2 - 1\}$ such that
\[
\bar \varphi(\overline{u_A}) = \bar \beta \overline{u_B}^{t_1} \bar \beta^{-1}.
\]
Suppose first that $t_1 = 0$. Then $\bar \varphi(\overline{u_A}) = 1$, hence there exists $t_2 \in \Z$ such that $\varphi(u_A) = \varphi(a_1)\varphi(a_2) = \delta_B^{t_2}$, therefore $\varphi(a_2) = \varphi(a_1)^{-1}\delta_B^{t_2} = \delta_B^{t_2} b_1^{-1}$.

Now suppose that $t_1 \neq 0$.
Let $p = \gcd(t_1, m_B/2)$ and $q = \frac{m_B}{2p}$. Then the order of $\bar \varphi(\overline{u_A})$ is $q$, and since $\bar \varphi(\overline{u_A})^{m_A/2} = 1$, it follows that $q$ divides $m_A/2$.

Moreover, there exists $t_2 \in \Z$ such that
\[
\varphi(u_A) = \beta u_B^{t_1} \beta^{-1} \delta_B^{t_2} = \beta u_B^{t_1 + \frac{t_2 m_B}{2}} \beta^{-1}.
\]  
Therefore,
\[
\varphi(a_2) = \varphi(a_1)^{-1} \varphi(u_A) = b_1^{-1} \beta u_B^{t_1 + \frac{t_2 m_B}{2}} \beta^{-1}.
\]  
Finally, note that
\[
\gcd\Big(t_1 + \frac{t_2 m_B}{2}, \frac{m_B}{2}\Big) = \gcd(t_1, m_B/2) = p.
\]  
\qed

\bigskip\noindent
{\bf Case 6.2:}  
{\it $m_A = 2$, $m_B$ is even, and $m_B \neq 2$.  
Then there exist $t_1, t_2 \in \Z$ such that $\varphi(a_2) = \delta_B^{t_1} b_1^{t_2}$.}

\bigskip\noindent
{\bf Proof.}  
Since $a_2$ commutes with $a_1$, then $\varphi(a_2)$ commutes with $\varphi(a_2) = b_1$. Hence, by \autoref{lem: missing lemma} there exist $t_1, t_2 \in \Z$ such that $\varphi(a_2) = \delta_B^{t_1} b_1^{t_2}$.
\qed

\bigskip\noindent
{\bf Case 7:}  
{\it $m_A$ is even (including $m_A = 2$) and $m_B = 2$.  
Then $\varphi(a_2)$ can be any element of $B$.}

\bigskip\noindent
{\bf Proof.}  
Let $\beta \in B$ be arbitrary.  
Since $B$ is abelian and $m_A$ is even, we always have
\[
\Pi(b_1, \beta, m_A) = \Pi(\beta, b_1, m_A),
\]  
so there always exists a homomorphism $\varphi: A \to B$ such that 
\[
\varphi(a_1) = b_1 \quad \text{and} \quad \varphi(a_2) = \beta.
\]  
\qed

\section{Characterizing Parabolic-Retractable Artin Groups}\label{section3}

The following lemma will be central in the proof of \autoref{theoremA}. 

\begin{lemma}\label{lemma_0_induction}
    Let $A_S$ be an Artin group. Then $A_S$ is parabolic-retractable if and only if, for every $X \subseteq S$ and every $x \in X$, the subgroup $A_X$ retracts onto $A_{X \setminus \{x\}}$. 
\end{lemma}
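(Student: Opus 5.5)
The proof splits into the two implications, and the main tool is that retractions compose and restrict well.

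\emph{Forward direction.} Assume $A_S$ is parabolic-retractable, and let $X \subseteq S$ and $x \in X$. By hypothesis there is a retraction $\rho : A_S \to A_{X\setminus\{x\}}$. We restrict it to $A_X$; this makes sense because $A_X$ is a subgroup of $A_S$ (the standard parabolic subgroup, by \citep{van1983homotopy}). The restriction $\rho|_{A_X} : A_X \to A_{X\setminus\{x\}}$ is a homomorphism that is the identity on $A_{X\setminus\{x\}} \subseteq A_X$. Hence it is a retraction of $A_X$ onto $A_{X\setminus\{x\}}$.

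\emph{Reverse direction.} Assume that for every $X \subseteq S$ and every $x\in X$ there is a retraction $\rho_{X,x} : A_X \to A_{X\setminus\{x\}}$. Fix $Y \subseteq S$ and enumerate $S \setminus Y = \{x_1, \dots, x_k\}$. Set $X_0 = S$ and $X_i = S \setminus \{x_1,\dots,x_i\}$, so that $X_k = Y$. Define
\[
\rho = \rho_{X_{k-1},x_k} \circ \cdots \circ \rho_{X_1,x_2} \circ \rho_{X_0,x_1} : A_S \to A_Y .
\]
Each map $\rho_{X_{i-1},x_i}$ sends $A_{X_{i-1}}$ into $A_{X_i}$, which is the domain of the next map, so the composition is well defined. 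For $g \in A_Y$ we have $g \in A_{X_i}$ for every $i$. Each factor fixes $A_{X_i} \supseteq A_Y$ pointwise, so $\rho(g) = g$ by induction on $i$. The degenerate case $Y = S$ is handled by the identity map. Since $Y$ was arbitrary, $A_S$ is parabolic-retractable.

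There is no real obstacle here. The only point needing care is the identification of the abstract Artin group on $X$ with the subgroup of $A_S$ generated by $X$, and of $A_{X\setminus\{x\}}$ with the subgroup of $A_X$ generated by $X\setminus\{x\}$. Both identifications are given by the result of \citep{van1983homotopy} quoted in the introduction. They are what make the compositions and restrictions above meaningful inside $A_S$.
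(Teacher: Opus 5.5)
Your proof is correct and takes essentially the same route as the paper. The forward direction is the same restriction argument. For the converse, you write out the chain of one-step retractions $A_S \to A_{X_1} \to \cdots \to A_Y$ explicitly, while the paper packages the same composition as an induction on $|S \setminus X|$.
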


\begin{proof}
Suppose first that $A_S$ is parabolic-retractable. In particular, for any subset $X \subseteq S$ and any $x \in X$, there exists a retraction
$\varphi: A_S \to A_{X \setminus \{x\}}$.
Restricting $\varphi$ to $A_X$ yields the desired retraction 
$\varphi|_{A_X} : A_X \to A_{X \setminus \{x\}}$.

Conversely, assume that for every $X \subseteq S$ and every $x \in X$, the subgroup $A_X$ retracts onto $A_{X \setminus \{x\}}$. We prove by induction on $k = |S \setminus X|$ that $A_S$ retracts onto $A_X$ for all subsets $X \subseteq S$.

For $k = 0$, the statement is trivial since the identity map gives a retraction $A_S \to A_S$.  
Assume now that $A_S$ retracts onto $A_X$ whenever $|S \setminus X| = k$. Let $X' \subseteq S$ with $|S \setminus X'| = k+1$. Choose $x \in S \setminus X'$ and set $X := X' \cup \{x\}$. Then $|S \setminus X| = k$, so by the induction hypothesis there exists a retraction $A_S \to A_X$. By assumption, $A_X$ retracts onto $A_{X'}$. Composing these two retractions gives the required retraction $A_S \to A_{X'}$.
\end{proof}

\begin{lemma}\label{lemma_1_odd_odd_odd}
Assume that $A_S$ is a parabolic-retractable Artin group. Let $a,b,c \in S$ be pairwise distinct. If $m_{a,b}$ and $m_{a,c}$ are odd, then $m_{b,c}$ is also odd.
\end{lemma}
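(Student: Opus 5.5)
We argue by contradiction, using a retraction onto a dihedral parabolic subgroup together with the classification in \autoref{theorem:classification_of_homo}. Assume $m_{a,b}$ and $m_{a,c}$ are odd. Since $A_S$ is parabolic-retractable, there is a retraction $\rho : A_S \to A_{\{b,c\}}$, so $\rho(b)=b$ and $\rho(c)=c$. Suppose, towards a contradiction, that $m_{b,c}$ is even or $\infty$.

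The plan is to restrict $\rho$ to the dihedral subgroups $A_{\{a,b\}}$ and $A_{\{a,c\}}$ and pin down $\rho(a)$. First view $\rho|_{A_{\{a,b\}}}$ as a homomorphism from the dihedral Artin group with generators $a_1=b$, $a_2=a$ and label $m_{a,b}$ (odd) to $B=A_{\{b,c\}}$ with $b_1=b$, $b_2=c$ and label $m_{b,c}$. It sends $a_1$ to $b_1$.

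\begin{itemize}
\item If $m_{b,c}$ is even, including $m_{b,c}=2$, or $m_{b,c}=\infty$, then item~2 of \autoref{theorem:classification_of_homo} forces $\rho(a)=b$. (Case~2.3 covers $m_{b,c}=2$.)
\item Applying the same argument to $\rho|_{A_{\{a,c\}}}$, with $a_1=c$, $a_2=a$, $b_1=c$, $b_2=b$ and odd label $m_{a,c}$, forces $\rho(a)=c$.
\end{itemize}

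Hence $b=c$ in $A_{\{b,c\}}$. This is false because $b\neq c$ are distinct standard generators: for instance, they have distinct images $(1,0)$ and $(0,1)$ under the abelianization when $m_{b,c}$ is even or $\infty$. This contradiction shows $m_{b,c}$ must be odd.

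The only point needing care is relabelling the generators so that the hypothesis ``$\varphi(a_1)=b_1$'' of \autoref{theorem:classification_of_homo} applies. The orientation is irrelevant because the dihedral presentation is symmetric in its two generators. We also need the degenerate cases $m_{b,c}\in\{2,\infty\}$ to be covered, and they are, by Cases~2.1 and~2.3.
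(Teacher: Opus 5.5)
Your proposal is correct and is essentially the paper's proof. The paper also restricts a retraction onto $A_{\{b,c\}}$ to $A_{\{a,b\}}$ and to $A_{\{a,c\}}$, and applies \autoref{theorem:classification_of_homo}(2) to each to get $\varphi(a)=b$ and $\varphi(a)=c$, a contradiction. Your added justification that $b\neq c$ (via the abelianization $\mathbb{Z}^2$) and your remark on relabelling generators make explicit what the paper leaves implicit.
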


\begin{proof}
Suppose, for contradiction, that $m_{b,c}$ is even or infinite. Since $A_S$ is parabolic-retractable, there exists a retraction 
$\varphi \colon A_{\{a,b,c\}} \to A_{\{b,c\}}$ 
with $\varphi(b) = b$ and $\varphi(c) = c$.  

Applying \autoref{theorem:classification_of_homo}(2) to the restriction of $\varphi$ on $A_{\{a,b\}}$, we deduce that $\varphi(a) = b$. On the other hand, applying \autoref{theorem:classification_of_homo}(2) to the restriction on $A_{\{a,c\}}$ forces $\varphi(a) = c$, a contradiction. Hence $m_{b,c}$ must be odd.
\end{proof}

\begin{lemma}\label{lemma_2_odd_even_odd}
Assume that $A_S$ is a parabolic-retractable Artin group. Let $a,b,c \in S$ be pairwise distinct. If $m_{a,b}$ is odd and $m_{a,c}$ is even, then $m_{b,c}$ is even.
\end{lemma}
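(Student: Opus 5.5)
The plan is to argue by contradiction, splitting according to whether $m_{b,c}$ is odd or $\infty$. Assume $m_{a,b}$ is odd and $m_{a,c}$ is even, and suppose $m_{b,c}$ is not even. Then either $m_{b,c}$ is odd or $m_{b,c} = \infty$, and each alternative should be ruled out using tools already available.

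\emph{Case $m_{b,c}$ odd.} This reduces to \autoref{lemma_1_odd_odd_odd}. Apply that lemma to the triple with $b$ in the role of the common vertex: $m_{b,a} = m_{a,b}$ and $m_{b,c}$ are both odd. Since $A_S$ is parabolic-retractable, the lemma gives that $m_{a,c}$ is odd. This contradicts the assumption that $m_{a,c}$ is even.

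\emph{Case $m_{b,c} = \infty$.} Since $A_S$ is parabolic-retractable, there is a retraction $\varphi \colon A_{\{a,b,c\}} \to A_{\{b,c\}}$ with $\varphi(b)=b$ and $\varphi(c)=c$. Here $A_{\{b,c\}}$ is free on $\{b,c\}$. We then restrict $\varphi$ to two dihedral subgroups.
\begin{itemize}
\item On $A_{\{a,b\}}$, with $b$ playing the role of the first generator, the source label $m_{a,b}$ is odd and the target label is $\infty$. \autoref{theorem:classification_of_homo}(2) then gives $\varphi(a) = b$.
\item On $A_{\{a,c\}}$, with $c$ as the first generator sent to the standard generator $c$, the source label $m_{a,c}$ is even and the target label is $\infty$. \autoref{theorem:classification_of_homo}(4), or directly Case 4.2 when $m_{a,c}=2$, gives $\varphi(a) = c^t$ for some $t \in \Z$.
\end{itemize}
Hence $b = c^t$ in the free group on $\{b,c\}$, which is impossible. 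Alternatively, and more directly, $b$ and $c$ would have to satisfy $\Pi(b,c,m_{a,c}) = \Pi(c,b,m_{a,c})$ in a free group, which fails because these are distinct reduced words.

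I do not expect a real obstacle. The only point needing care is applying \autoref{theorem:classification_of_homo} with the correct ordering of generators, so that the generator known to be fixed ($b$, respectively $c$) plays the role of $a_1 \mapsto b_1$. One must also remember that $m_{a,c}=2$ is covered by the even cases of the theorem.
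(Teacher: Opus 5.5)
Your proof is correct and is essentially the paper's own argument: you rule out the odd case with \autoref{lemma_1_odd_odd_odd} (taking $b$ as the common vertex), and the $\infty$ case by restricting the retraction $A_{\{a,b,c\}}\to A_{\{b,c\}}$ to $A_{\{a,b\}}$ and $A_{\{a,c\}}$ and applying parts (2) and (4) of \autoref{theorem:classification_of_homo}. Your alternative ending, which pushes the relation $\Pi(a,c,m_{a,c})=\Pi(c,a,m_{a,c})$ forward to a nontrivial relation between $b$ and $c$ in a free group, is also valid and avoids part (4) entirely.
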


\begin{proof}
By \autoref{lemma_1_odd_odd_odd}, the label $m_{b,c}$ cannot be odd. Suppose instead that $m_{b,c} = \infty$. Since $A_S$ is parabolic-retractable, there exists a retraction 
$\varphi \colon A_{\{a,b,c\}} \to A_{\{b,c\}}$ 
with $\varphi(b) = b$ and $\varphi(c) = c$.  

Applying \autoref{theorem:classification_of_homo}(2) to the restriction of $\varphi$ on $A_{\{a,b\}}$ yields $\varphi(a) = b$. Meanwhile, applying \autoref{theorem:classification_of_homo}(4) to the restriction on $A_{\{a,c\}}$ forces $\varphi(a) = c^t$ for some $t \in \Z$. These conditions are incompatible, giving a contradiction. Thus, $m_{b,c}$ must be even.
\end{proof}

\begin{lemma}\label{lemma_3_mbc_divides_odd_labels}
Assume that $A_S$ is a parabolic-retractable Artin group. Let $a,b,c \in S$ be pairwise distinct with $m_{a,b}, m_{a,c}$, and $m_{b,c}$ all odd. Then $m_{b,c}$ divides either $m_{a,b}$ or $m_{a,c}$.
\end{lemma}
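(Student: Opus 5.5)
We argue as in \autoref{lemma_1_odd_odd_odd}. Since $A_S$ is parabolic-retractable, there is a retraction $\varphi \colon A_{\{a,b,c\}} \to A_{\{b,c\}}$ with $\varphi(b)=b$ and $\varphi(c)=c$. We study the two restrictions of $\varphi$ to $A_{\{a,b\}}$ and to $A_{\{a,c\}}$, each viewed as a homomorphism into the dihedral group $B = A_{\{b,c\}}$ with $m_B = m_{b,c}$ odd.

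\textbf{The restriction to $A_{\{a,b\}}$.} Take $a_1 = b$, $a_2 = a$ and $b_1 = b$, $b_2 = c$. Since $m_{a,b}$ and $m_{b,c}$ are odd, \autoref{theorem:classification_of_homo}(3) applies and leaves two alternatives:
\begin{itemize}
\item $\varphi(a) = b$, or
\item $m_{b,c}$ divides $m_{a,b}$ and $\varphi(a) = b^t c b^{-t}$ for some $t \in \Z$.
\end{itemize}

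\textbf{The restriction to $A_{\{a,c\}}$.} Take $a_1 = c$, $a_2 = a$ and $b_1 = c$, $b_2 = b$. Again by \autoref{theorem:classification_of_homo}(3), either $\varphi(a) = c$, or $m_{b,c}$ divides $m_{a,c}$ and $\varphi(a) = c^s b c^{-s}$ for some $s \in \Z$.

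\textbf{Combining the two.} Suppose, for contradiction, that $m_{b,c}$ divides neither $m_{a,b}$ nor $m_{a,c}$. Then the first restriction forces $\varphi(a) = b$ and the second forces $\varphi(a) = c$. Hence $b = c$ in $A_{\{b,c\}}$. This is impossible: for instance, $b$ and $c$ are distinct standard generators, and their images in the abelianization of a dihedral Artin group of odd label are equal, so one needs a finer argument, such as a homomorphism to the symmetric group $S_3$ sending $b \mapsto (12)$ and $c \mapsto (23)$. This map respects the odd braid relation of length $m_{b,c}$, since $m_{b,c}$ odd and $m_{b,c} \ge 3$ means $(12)$ and $(23)$ generate a dihedral group of order $6$ in which $\Pi$ of odd length $m$ with $3 \mid m$ or not still…

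To avoid this divisibility subtlety, a cleaner route is the standard fact that distinct standard generators are distinct in any Artin group. One way to see it is through the Coxeter group quotient $W$, where the images of $b$ and $c$ are distinct reflections. Another way is Case~3 itself: if $b = c$, the homomorphism $A_{\{b,c\}} \to A_{\{b,c\}}$ given by the identity would send $c$ to $b$, and the classification would not distinguish the two alternatives, which is inconsistent. I will use the Coxeter quotient. This contradiction shows that $m_{b,c}$ divides $m_{a,b}$ or $m_{a,c}$.

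\textbf{Main point to check.} There is no real obstacle. The only point requiring care is applying \autoref{theorem:classification_of_homo} with the correct ordering of the generators: $b_1$ must be the image of $a_1$, which is $b$ in the first restriction and $c$ in the second.
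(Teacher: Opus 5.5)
Your proof is correct and is the paper's argument: you restrict the retraction to $A_{\{a,b\}}$ and $A_{\{a,c\}}$, apply \autoref{theorem:classification_of_homo}(3) with the generator orderings you chose, and observe that $\varphi(a)=b$ and $\varphi(a)=c$ cannot both hold. Your Coxeter-quotient justification of $b\neq c$ (their images are distinct reflections in the dihedral group of order $2m_{b,c}$) is valid and more explicit than the paper, but you should delete two things: the unfinished $S_3$ digression, since that map respects the relation only when $3 \mid m_{b,c}$, and the circular ``Case~3 itself'' remark.
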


\begin{proof}
Since $A_S$ is parabolic-retractable, there exists a retraction 
$\varphi \colon A_{\{a,b,c\}} \to A_{\{b,c\}}$ 
with $\varphi(b) = b$ and $\varphi(c) = c$.  

By \autoref{theorem:classification_of_homo}(3), restricting $\varphi$ to $A_{\{a,b\}}$ gives two possibilities:
\begin{itemize}
    \item[(b.1)] $\varphi(a)=b$;
    \item[(b.2)] $m_{b,c}$ divides $m_{a,b}$, and there exists $t_1 \in \mathbb{Z}$ with $\varphi(a)=b^{t_1}cb^{-t_1}$.
\end{itemize}
Similarly, restricting to $A_{\{a,c\}}$ gives:
\begin{itemize}
    \item[(c.1)] $\varphi(a)=c$;
    \item[(c.2)] $m_{b,c}$ divides $m_{a,c}$, and there exists $t_2 \in \mathbb{Z}$ with $\varphi(a)=c^{t_2}bc^{-t_2}$.
\end{itemize}

The cases (b.1) and (c.1) are incompatible, so at least one of (b.2) or (c.2) must hold. Hence $m_{b,c}$ divides either $m_{a,b}$ or $m_{a,c}$.
\end{proof}

\begin{lemma}\label{lemma_4_mbc_divides_odd_labels_all_cases}
Assume that $A_S$ is a parabolic-retractable Artin group. Let $a,b,c\in S$ be pairwise distinct with $m_{a,b}, m_{a,c}$, and $m_{b,c}$ all odd. Then, up to permuting $a,b,c$, we have $m_{a,b}=m_{a,c}$ and $m_{b,c}\mid m_{a,b}$.
\end{lemma}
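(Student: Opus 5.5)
The idea is to apply \autoref{lemma_3_mbc_divides_odd_labels} three times, once with each of $a,b,c$ as the distinguished vertex, and then look at the largest of the three labels. That lemma is stated for an arbitrary triple of pairwise distinct elements with all three labels odd, so it may be applied to any relabelling of $\{a,b,c\}$. Write $x = m_{b,c}$, $y = m_{a,c}$ and $z = m_{a,b}$. Taking $a$, then $b$, then $c$ as the distinguished vertex gives three divisibility facts:
\[
x \mid y \ \text{or}\ x \mid z, \qquad y \mid x \ \text{or}\ y \mid z, \qquad z \mid x \ \text{or}\ z \mid y.
\]
In words: each label divides at least one of the other two labels.

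Next we show that the maximal label occurs at least twice. Let $N = \max\{x,y,z\}$, and pick one edge whose label equals $N$. By the facts above, $N$ divides the label of some other edge of the triangle. That label is a positive integer at most $N$ and is divisible by $N$, so it equals $N$. Hence two distinct edges of the triangle both carry the label $N$.

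Any two distinct edges of a triangle share exactly one vertex. After permuting $a,b,c$ we may assume this shared vertex is $a$. Then $m_{a,b} = m_{a,c} = N$. The third label is $m_{b,c}$. Applying \autoref{lemma_3_mbc_divides_odd_labels} with $a$ as the distinguished vertex, $m_{b,c}$ divides $m_{a,b}$ or $m_{a,c}$. Both of these equal $N$, so $m_{b,c} \mid m_{a,b}$, which is the claim.

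There is no real obstacle here. The only point needing care is that the relabelling of $a,b,c$ is harmless, since the hypotheses (pairwise distinct elements, all three labels odd) are symmetric in $a,b,c$.
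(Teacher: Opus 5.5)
Your proof is correct and follows the paper's argument: apply \autoref{lemma_3_mbc_divides_odd_labels} with each of $a,b,c$ as the distinguished vertex, observe that the largest label must divide, and hence equal, another label, and conclude that the remaining label divides this common value. The paper orders the labels as $x\le y\le z$ where you work with the maximum and the shared vertex, but the two arguments are the same.
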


\begin{proof}
By \autoref{lemma_3_mbc_divides_odd_labels} applied to the three ordered pairs, each of the three odd numbers
\[
x:=m_{b,c},\qquad y:=m_{a,c},\qquad z:=m_{a,b}
\]
divides at least one of the other two. Without loss of generality, assume $x\le y\le z$.

From the divisibility condition for $z$ we have $z\mid x$ or $z\mid y$. Since $z$ is the largest of the three, either possibility forces $z\le x$ or $z\le y$, hence $z=x$ or $z=y$. In both cases we conclude $y=z$.

Now apply the divisibility condition for $x$: we have $x\mid y$ or $x\mid z$. As $y=z$, this yields $x\mid y$. Translating back, $m_{a,b}=m_{a,c}$ and $m_{b,c}\mid m_{a,b}$, as claimed (up to permutation of $a,b,c$).
\end{proof}

\begin{lemma}\label{lemma_5_even_even_odd}
Assume that $A_S$ is a parabolic-retractable Artin group. Let $a,b,c \in S$ be pairwise distinct. If $m_{a,b}$ and $m_{a,c}$ are even, and $m_{b,c}$ is odd, then $m_{a,b} = m_{a,c}$.
\end{lemma}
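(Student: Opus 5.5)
The plan is to use two retractions, onto $A_{\{a,b\}}$ and onto $A_{\{a,c\}}$, and extract a divisibility relation from each. By symmetry between $b$ and $c$, it suffices to prove that $m_{a,b}$ divides $m_{a,c}$.

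First, since $A_S$ is parabolic-retractable, there is a retraction $\varphi\colon A_{\{a,b,c\}}\to A_{\{a,b\}}$ with $\varphi(a)=a$ and $\varphi(b)=b$. Restrict $\varphi$ to $A_{\{b,c\}}$. This is a homomorphism between dihedral Artin groups sending the generator $b$ to the standard generator $b$ of the target. The source label $m_{b,c}$ is odd and the target label $m_{a,b}$ is even; this includes $m_{a,b}=2$, which is covered either by Case 2.2 or by Case 2.3 of the proof. So \autoref{theorem:classification_of_homo}(2) forces $\varphi(c)=b$.

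Next, restrict $\varphi$ to $A_{\{a,c\}}$. Since $\varphi(a)=a$ and $\varphi(c)=b$, applying $\varphi$ to the defining relation $\Pi(a,c,m_{a,c})=\Pi(c,a,m_{a,c})$ gives
\[
\Pi(a,b,m_{a,c})=\Pi(b,a,m_{a,c}) \quad\text{in } A_{\{a,b\}}.
\]
The key step is to show that this forces $m_{a,b}\mid m_{a,c}$. To do so, pass to the Coxeter group $W_{\{a,b\}}$, the dihedral group of order $2m_{a,b}$, via the natural quotient map sending $a,b$ to involutions. Write $k=m_{a,c}$, which is even. The identity above becomes $(ab)^{k/2}=(ba)^{k/2}=(ab)^{-k/2}$ in $W_{\{a,b\}}$, so $(ab)^k=1$. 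Since $ab$ has order exactly $m_{a,b}$ in $W_{\{a,b\}}$, we obtain $m_{a,b}\mid m_{a,c}$.

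Exchanging the roles of $b$ and $c$, we use a retraction $A_{\{a,b,c\}}\to A_{\{a,c\}}$. Its restriction to $A_{\{b,c\}}$ sends $c$ to $c$, so \autoref{theorem:classification_of_homo}(2) gives $\varphi(b)=c$. The same argument then yields $m_{a,c}\mid m_{a,b}$. Hence $m_{a,b}=m_{a,c}$.

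The only point needing care is the divisibility step: we must use that $ab$ has order exactly $m_{a,b}$ in the dihedral Coxeter group. This is standard, and the Coxeter quotient avoids any need for Garside normal forms in $A_{\{a,b\}}$.
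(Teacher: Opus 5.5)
Your proof is correct, but the key divisibility step follows a different route from the paper's. The opening is the same up to the symmetry $b\leftrightarrow c$. The paper retracts onto $A_{\{a,c\}}$ and applies \autoref{theorem:classification_of_homo}(2) on $A_{\{b,c\}}$ to get $\varphi(b)=c$, exactly as you get $\varphi(c)=b$.

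From there the paper restricts to $A_{\{a,b\}}$ and invokes part (6) of the classification. It first eliminates the alternative $\varphi(b)=\delta_{a,c}^{t_1}a^{t_2}$. It then forces $t=1$ in $\varphi(b)=a^{-1}\beta(ac)^t\beta^{-1}$, using the homomorphism $\xi\colon A_{\{a,c\}}\to\Z$ with $\xi(a)=0$ and $\xi(c)=1$. Finally it reads off $m_{a,c}\mid m_{a,b}$ from the condition $q\mid m_{a,b}/2$. Because part (6) excludes target label $2$, the case $m_{a,c}=2$ has to be set aside beforehand.

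You instead push the braid relation of the edge $\{a,c\}$ through $\varphi$ and into the finite dihedral Coxeter group $W_{\{a,b\}}$. There $(ab)^{m_{a,c}}=1$ immediately gives $m_{a,b}\mid m_{a,c}$.

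Your argument is more elementary and more self-contained. It needs only part (2) of the classification plus the fact that $ab$ has order $m_{a,b}$ in $W_{\{a,b\}}$, and it handles label $2$ uniformly. It also isolates a reusable fact: a relation $\Pi(a,b,k)=\Pi(b,a,k)$ with $k$ even in $A_{\{a,b\}}$ forces $m_{a,b}\mid k$, a partial converse to \autoref{lemma sudoku}.

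The paper's version stays entirely inside the framework of \autoref{theorem:classification_of_homo}. There the same divisibility comes from the order $m_{a,c}/2$ of the image of $ac$ in $A_{\{a,c\}}/Z(A_{\{a,c\}})$. So it needs no input beyond the classification already proved, at the cost of the case elimination and the separate treatment of label $2$.
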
  

\begin{proof}
We will show that $m_{a,c}$ divides $m_{a,b}$. By symmetry, the same argument shows that $m_{a,b}$ divides $m_{a,c}$, hence $m_{a,b}=m_{a,c}$. Note that if $m_{a,c}=2$, then the conclusion is trivial, since $2$ divides every even number. Thus, we may assume $m_{a,c} > 2$.

Since $A_S$ is parabolic-retractable, there exists a retraction 
\[
\varphi \colon A_{\{a,b,c\}} \longrightarrow A_{\{a,c\}}
\]
such that $\varphi(a)=a$ and $\varphi(c)=c$.  

Restricting $\varphi$ to $A_{\{b,c\}}$, \autoref{theorem:classification_of_homo}(2) implies that $\varphi(b)=c$.  
Restricting $\varphi$ instead to $A_{\{a,b\}}$, \autoref{theorem:classification_of_homo}(6) yields two possibilities:

\begin{itemize}
\item[(a.1)] There exist $t_1,t_2 \in \mathbb{Z}$ such that 
\[
\varphi(b)=\delta_{a,c}^{t_1} a^{t_2},
\]
where $\delta_{a,c}$ denotes the generator of the center in $A_{\{a,c\}}$;
\item[(a.2)] There exist $\beta \in A_{\{a,c\}}$ and $t \in \mathbb{Z}$ such that 
\[
\varphi(b)=a^{-1}\beta (ac)^t \beta^{-1},
\]
and if $p=\gcd(t,\tfrac{m_{a,c}}{2})$ and $q=\tfrac{m_{a,c}}{2p}$, then $q \mid \tfrac{m_{a,b}}{2}$.
\end{itemize}

We claim that case (a.1) is impossible. Consider the homomorphism 
\[
\xi : A_{\{a,c\}} \longrightarrow \mathbb{Z}, \qquad \xi(a)=0,\ \xi(c)=1.
\]
This is well defined because $m_{a,c}$ is even. Since $\varphi(b)=c$, we should have $\xi(\varphi(b))=1$. However,
\[
\xi\big(\delta_{a,c}^{t_1} a^{t_2}\big) = t_1 \cdot \frac{m_{a,c}}{2},
\]
which is a multiple of $\tfrac{m_{a,c}}{2}>1$. Hence case (a.1) cannot occur.  

Thus we must be in case (a.2). In particular,
\[
c=\varphi(b)=a^{-1}\beta (ac)^t \beta^{-1}.
\]
Applying $\xi$ gives 
\[
1=\xi(c)=\xi(a^{-1}\beta (ac)^t \beta^{-1}) = t.
\]
Hence $p=\gcd(t,\tfrac{m_{a,c}}{2})=1$, so $q=\tfrac{m_{a,c}}{2}$. The condition that $q$ divides $\tfrac{m_{a,b}}{2}$ implies that $m_{a,c}$ divides $m_{a,b}$, as required.
\end{proof}

The next lemma gives one of the implications in \autoref{theoremB}.

\begin{lemma}\label{lemma_6}
Let $A_S$ be a parabolic-retractable Artin group. Then its associated Coxeter matrix $M$ is parabolic-retract-compatible.
\end{lemma}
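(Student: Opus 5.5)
Define a relation $\sim$ on $S$ by $a \sim b$ if and only if $a = b$ or $m_{a,b}$ is odd. The partition $S = T_1 \sqcup \dots \sqcup T_k$ will be the set of equivalence classes of $\sim$. Reflexivity and symmetry are immediate. Transitivity is exactly \autoref{lemma_1_odd_odd_odd}: if $m_{a,b}$ and $m_{a,c}$ are odd with $b \neq c$, then $m_{b,c}$ is odd.

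Next I check condition 1 of parabolic-retract-compatibility. By construction, within each class $T_i$ all off-diagonal entries are odd and finite, so $M_{T_i}$ is odd. For any pairwise distinct triple in $T_i$, \autoref{lemma_4_mbc_divides_odd_labels_all_cases} gives, up to permutation, $m_{a,b}=m_{a,c}$ with $m_{b,c}$ dividing $m_{a,b}$. Hence $M_{T_i}$ is retract-compatible. Classes of size one or two need no triple check; a class of size one is retract-compatible by convention.

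For condition 2, fix $i \neq j$. Every entry $m_{a,b}$ with $(a,b)\in T_i\times T_j$ is even or $\infty$ by definition of the classes. It remains to show these entries are all equal. Since $T_i \times T_j$ is connected by moves that change one coordinate at a time, it suffices to show $m_{a,b}=m_{a',b}$ for $a,a'\in T_i$ distinct and $b\in T_j$; the other coordinate is handled symmetrically. Here $m_{a,a'}$ is odd, and $m_{a,b}$, $m_{a',b}$ are even or $\infty$.

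The key point is that the mixed case, one even and one infinite, is impossible. Applied to the triple $a', a, b$ (with $m_{a',a}$ odd), \autoref{lemma_2_odd_even_odd} says that if $m_{a,b}$ is even then $m_{a',b}$ is even; the symmetric application gives the converse. So either both entries are $\infty$, in which case they are equal, or both are even. In the even case, \autoref{lemma_5_even_even_odd} applied with $b$ in the role of $a$ (two even labels at $b$, odd label $m_{a,a'}$) gives $m_{b,a}=m_{b,a'}$. Thus the value is constant on $T_i\times T_j$; call it $n_{ij}$.

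I expect no real obstacle here. The only care needed is the bookkeeping of which lemma to apply to which ordered triple, and noting that the equal-value argument propagates along both coordinates.
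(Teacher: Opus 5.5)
Your proof is correct and follows essentially the same route as the paper. Your equivalence classes of $\sim$ are exactly the connected components of the odd-labelled graph $\Gamma_{\mathrm{odd}}$, which Lemma~\ref{lemma_1_odd_odd_odd} makes complete; condition 1 then comes from Lemma~\ref{lemma_4_mbc_divides_odd_labels_all_cases}, and condition 2 from Lemmas~\ref{lemma_2_odd_even_odd} and~\ref{lemma_5_even_even_odd}, with your one-coordinate-at-a-time argument making explicit the propagation step that the paper leaves implicit.
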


\begin{proof}
Recall that $M$ is parabolic-retract-compatible if there exists a partition 
\[
S = T_1 \sqcup \dots \sqcup T_k
\]
such that:

\begin{enumerate}
    \item For each $i \in \{1,\dots,k\}$, the submatrix $M_{T_i}$ is retract-compatible; that is, all its entries are odd, and for any triple $a,b,c \in T_i$, one has that $m_{b,c}$ divides $m_{a,b}=m_{a,c}$, up to permutation.
    \item For each pair $i,j \in \{1,\dots,k\}$, with $i \neq j$, there exists $n_{ij}$, which is either an even number or $\infty$, such that $m_{a,b} = n_{ij}$ for all $(a,b)\in T_i \times T_j$.
\end{enumerate}

To construct the required partition, consider the Coxeter graph $\Gamma$ of $A_S$ and define a new graph $\Gamma_{\mathrm{odd}}$ with the same set of vertices as $\Gamma$, whose edges are exactly those labeled by odd integers. Let $\Gamma_1,\dots,\Gamma_k$ be the connected components of $\Gamma_{\mathrm{odd}}$, and define $T_i$ as the set of vertices of $\Gamma_i$.

By \autoref{lemma_1_odd_odd_odd}, each $\Gamma_i$ is a complete \emph{odd} Coxeter graph, i.e. all labels in $M_{T_i}$ are odd.
Furthermore, by \autoref{lemma_4_mbc_divides_odd_labels_all_cases}, each $M_{T_i}$ is retract-compatible. Thus condition (1) holds.

It remains to verify condition (2). Let $i \neq j$ and choose $a \in T_i$, $b \in T_j$. We already know that $m_{a,b}$ must be either even or $\infty$.  

If $m_{a,b}=\infty$, then for any $x \in T_i$, $y \in T_j$, we must also have $m_{x,y}=\infty$, as otherwise \autoref{lemma_2_odd_even_odd} would be contradicted.  

If $m_{a,b}$ is even, then by the same lemma all $m_{x,y}$ with $x \in T_i$, $y \in T_j$ must also be even. Moreover, \autoref{lemma_5_even_even_odd} implies that these even values must all coincide.

Thus, for each pair of distinct components $T_i, T_j$, all labels between them are equal to some $n_{ij} \in 2\mathbb{N} \cup \{\infty\}$. Hence condition (2) is satisfied, and the proof is complete.
\end{proof}

The next lemma is a technical remark that will be useful later.

\begin{lemma}\label{lemma sudoku}
Let $m,m'\in \N_{\ge 2}$. If $m$ divides $m'$ and $\Pi(a,b,m)=\Pi(b,a,m)$, then one also has $\Pi(a,b,m')=\Pi(b,a,m')$.
\end{lemma}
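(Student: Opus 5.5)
The plan is to work in an arbitrary group $G$ containing elements $a,b$ with $\Pi(a,b,m)=\Pi(b,a,m)$, and to show that $\Pi(a,b,km)=\Pi(b,a,km)$ for every $k\ge 1$. The argument splits according to the parity of $m$.

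\emph{Case $m$ even.} Write $m=2n$, so the hypothesis reads $(ab)^n=(ba)^n$. For any multiple $m'=2nk$ we have
\[
\Pi(a,b,m')=(ab)^{nk}=\bigl((ab)^n\bigr)^k=\bigl((ba)^n\bigr)^k=(ba)^{nk}=\Pi(b,a,m').
\]
This settles the case immediately.

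\emph{Case $m$ odd.} Set $\Delta=\Pi(a,b,m)=\Pi(b,a,m)$. The key observation is the conjugation rule $a\Delta=\Delta b$ and $b\Delta=\Delta a$. To see the first, use $\Delta=\Pi(b,a,m)$, which ends in $b$ because $m$ is odd: then $a\Delta=a\,\Pi(b,a,m)=\Pi(a,b,m+1)$, and also $\Delta b=\Pi(a,b,m)\,b=\Pi(a,b,m+1)$. The second identity is symmetric. Consequently $\Delta$ conjugates any alternating word in $a,b$ to the same word with $a$ and $b$ swapped, and $\Delta^2$ commutes with both $a$ and $b$.

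Now let $m'=km$. The word $\Pi(a,b,km)$ splits into $k$ consecutive blocks of length $m$, alternating as $\Delta$ written as $\Pi(a,b,m)$ and $\Delta$ written as $\Pi(b,a,m)$, since each block of odd length ends on the letter that starts the next block. Hence $\Pi(a,b,km)=\Delta^k$, and by the same splitting $\Pi(b,a,km)=\Delta^k$. Both products are equal, regardless of whether $m'$ is odd or even.

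The only delicate point is this block decomposition when $m$ is odd. There one must check carefully that consecutive blocks begin with alternating letters, so that each block is genuinely a copy of $\Delta$. Once that is verified, no further computation is needed, and the conjugation rule is not even required for the equality itself.
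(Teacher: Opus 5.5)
Your proof is correct and is essentially the paper's argument. The paper also cuts $\Pi(a,b,km)$ into length-$m$ blocks, but in the odd case it groups them into pairs $\Pi(a,b,m)\Pi(b,a,m)$ and splits on the parity of $k$, whereas you note that every block equals $\Delta$, which gives $\Delta^k$ for both words at once. One wording fix: an odd-length block ends on the same letter it \emph{starts} with, so the next block begins with the \emph{other} letter (not the letter the previous block ends on), and this is what makes the blocks alternate.
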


\begin{proof}
Write $m'=km$ for some $k\in \Z_{\ge 1}$.

\smallskip
\noindent\textbf{Case 1: $m$ even.}  
Then $m'$ is even and
\[
\Pi(a,b,m')=\bigl(\Pi(a,b,m)\bigr)^k=\bigl(\Pi(b,a,m)\bigr)^k=\Pi(b,a,m').
\]

\smallskip
\noindent\textbf{Case 2: $m$ odd and $m'$ even.}  
Then $k$ is even, and
\[
\Pi(a,b,m')=\bigl(\Pi(a,b,m)\,\Pi(b,a,m)\bigr)^{k/2}
=\bigl(\Pi(b,a,m)\,\Pi(a,b,m)\bigr)^{k/2}
=\Pi(b,a,m').
\]

\smallskip
\noindent\textbf{Case 3: $m$ odd and $m'$ odd.}  
Then $k$ is odd, and
\[
\begin{aligned}
\Pi(a,b,m')
&=\bigl(\Pi(a,b,m)\,\Pi(b,a,m)\bigr)^{\tfrac{k-1}{2}}\,\Pi(a,b,m) \\
&=\bigl(\Pi(b,a,m)\,\Pi(a,b,m)\bigr)^{\tfrac{k-1}{2}}\,\Pi(b,a,m)
=\Pi(b,a,m').
\end{aligned}
\]

In all cases we obtain $\Pi(a,b,m')=\Pi(b,a,m')$, as claimed.
\end{proof}

The following two lemmas explicitly construct retractions for Artin groups associated to retract-compatible Coxeter matrices.

\begin{lemma}\label{lemma_7_inductive_retract}
    Let $M$ be a retract-compatible Coxeter matrix, and let $A_S$ be its associated Artin group with $|S| \geq 2$. For every $x \in S$, there exists an (ordinary) retraction 
    \(\psi_x : A_S \rightarrow A_{S \setminus \{x\}}\) 
    such that \(\psi_x(x) \in S \setminus \{x\}\) is nontrivial.
\end{lemma}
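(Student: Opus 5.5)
We will construct $\psi_x$ as an ordinary map: choose $y \in S\setminus\{x\}$, set $\psi_x(x)=y$ and $\psi_x(s)=s$ for $s\neq x$. Such an assignment is a retraction onto $A_{S\setminus\{x\}}$ as soon as it respects the defining relations of $A_S$. Relations not involving $x$ are sent to themselves, so the only relations to check are
\[
\Pi(y,s,m_{x,s})=\Pi(s,y,m_{x,s}), \qquad s\in S\setminus\{x\},
\]
together with the relation for the pair $\{x,y\}$ itself, which becomes $\Pi(y,y,m_{x,y})=\Pi(y,y,m_{x,y})$ and is trivial. If $|S|=2$, take $y$ to be the other element: no other relations need checking, and $\psi_x(x)=y$ is a nontrivial standard generator.

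For $s\neq x,y$, the relation $\Pi(y,s,m_{y,s})=\Pi(s,y,m_{y,s})$ holds in $A_{S\setminus\{x\}}$. By \autoref{lemma sudoku}, it therefore suffices that $m_{y,s}$ divides $m_{x,s}$ for every $s\in S\setminus\{x,y\}$. So the whole proof reduces to a combinatorial choice: find $y\neq x$ such that $m_{y,s}\mid m_{x,s}$ for all $s\neq x,y$.

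\emph{Choosing $y$.} I would take $y$ minimizing $m_{x,y}$ over $S\setminus\{x\}$, and check each $s\notin\{x,y\}$ by applying retract-compatibility to the triple $\{x,y,s\}$. Up to permutation, two of the three labels are equal and the third divides them. There are three cases.
\begin{itemize}
\item If $m_{x,y}=m_{x,s}$ with $m_{y,s}$ dividing both, then $m_{y,s}\mid m_{x,s}$ directly.
\item If $m_{x,y}=m_{y,s}$ with $m_{x,s}\mid m_{x,y}$, minimality gives $m_{x,s}\ge m_{x,y}$, so $m_{x,s}=m_{x,y}=m_{y,s}$ and divisibility holds.
\item If $m_{x,s}=m_{y,s}$, divisibility is trivial.
\end{itemize}
Hence every $s$ satisfies the condition, and $\psi_x$ is a well-defined ordinary retraction with $\psi_x(x)=y\in S\setminus\{x\}$.

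The main obstacle is this choice of $y$. A careless choice fails: $x$ could be sent to a vertex whose labels are not divisors of the corresponding labels at $x$. The minimality trick together with the triangle condition is what makes it work, so I would check the second case above carefully, since it is the one where minimality is genuinely used.
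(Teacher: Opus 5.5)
Your proof is correct and follows the paper's argument. You use the same choice of $y$ minimizing $m_{x,y}$, the same ordinary map sending $x\mapsto y$, and the same reduction via \autoref{lemma sudoku} to the condition $m_{y,s}\mid m_{x,s}$ for all $s\neq x,y$. The only difference is cosmetic: the paper first shows $m_{x,y}\mid m_{x,s}$ and then splits on whether equality holds, whereas you go directly through the three configurations allowed by the triangle condition, which is if anything slightly cleaner.
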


\begin{proof}
First, we show that there exists \(y \in S \setminus \{x\}\) such that \(m_{x,y}\) divides \(m_{x,y'}\) for every \(y' \in S \setminus \{x\}\). Indeed, choose $y \in S \setminus \{x\}$ with $m_{x,y}$ minimal, that is, for all 
$y' \in S \setminus \{x\}$ we have $m_{x,y} \le m_{x,y'}$. 
Let $y' \in S \setminus \{x,y\}$. Then the hypothesis that $M$ is retract-compatible applied to $\{x,y,y'\}$ implies that either 
$m_{x,y} < m_{x,y'}$ and $m_{x,y} \mid m_{x,y'}$, or 
$m_{x,y} = m_{x,y'}$.


Next, we claim that for every \(z \in S \setminus \{x, y\}\), the value \(m_{y,z}\) divides \(m_{x,z}\). If \(m_{x,y} = m_{x,z}\), then retract-compatibility implies \(m_{y,z} \mid m_{x,z}\). Otherwise, since \(m_{x,y}\) divides \(m_{x,z}\) (by the choice of \(y\)) and the two values are distinct, retract-compatibility implies \(m_{x,z} = m_{y,z}\). This proves the claim.

We now define a map \(\psi_x\) by
\[
\psi_x(z) = z \quad \text{for all } z \in S \setminus \{x\}, \qquad \psi_x(x) = y.
\]
It remains to verify that \(\psi_x\) is a homomorphism. That is, we must check that
\[
\forall z \in S \setminus \{x\}, \quad \Pi(\psi_x(x), \psi_x(z), m_{x,z}) = \Pi(\psi_x(z), \psi_x(x), m_{x,z}).
\]
By the claim above, \(m_{y,z}\) divides \(m_{x,z}\), and by \autoref{lemma sudoku},
\[
\Pi(\psi_x(x), \psi_x(z), m_{x,z}) = \Pi(y, z, m_{x,z}) = \Pi(z, y, m_{x,z}) = \Pi(\psi_x(z), \psi_x(x), m_{x,z}),
\]
which completes the proof.
\end{proof}

\begin{lemma}\label{lemma_8_explicit_retract}
Let $M$ be a parabolic-retract-compatible Coxeter matrix, and let $A_S$ be its associated Artin group. Then, for every $x \in S$, there exists an ordinary retraction 
\[
\varphi_x: A_S \to A_{S \setminus \{x\}}.
\]
\end{lemma}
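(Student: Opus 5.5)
Fix $x \in S$ and let $T_i$ be the block of the partition $S = T_1 \sqcup \dots \sqcup T_k$ that contains $x$. The retraction will be built block by block, and there are two cases.

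\textbf{Case $|T_i| \ge 2$.} Then $M_{T_i}$ is retract-compatible, so \autoref{lemma_7_inductive_retract} provides $y \in T_i \setminus \{x\}$ together with an ordinary retraction $\psi_x : A_{T_i} \to A_{T_i \setminus \{x\}}$ satisfying $\psi_x(x) = y$. Define $\varphi_x$ on generators by
\[
\varphi_x(x) = y, \qquad \varphi_x(z) = z \ \text{ for } z \neq x.
\]
We must check every relation involving $x$.
\begin{itemize}
\item For $z \in T_i \setminus \{x\}$, the relation is already verified inside \autoref{lemma_7_inductive_retract}.
\item For $z \in T_j$ with $j \neq i$, we have $m_{x,z} = n_{ij} = m_{y,z}$, because $x$ and $y$ lie in the same block. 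If $n_{ij} = \infty$ there is nothing to check. Otherwise the relation $\Pi(y,z,m_{x,z}) = \Pi(z,y,m_{x,z})$ is exactly a defining relation of $A_{S\setminus\{x\}}$.
\end{itemize}
So $\varphi_x$ is a well-defined ordinary retraction.

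\textbf{Case $T_i = \{x\}$.} Every label $m_{x,z}$ with $z \neq x$ is then even or $\infty$. Define
\[
\varphi_x(x) = 1, \qquad \varphi_x(z) = z \ \text{ for } z \neq x.
\]
For $m_{x,z}$ even, the relation $\Pi(x,z,m)=\Pi(z,x,m)$ becomes $z^{m/2} = z^{m/2}$, which holds trivially; for $m_{x,z}=\infty$ there is no relation. Hence $\varphi_x$ is again a homomorphism. It restricts to the identity on $A_{S\setminus\{x\}}$ and sends $x$ to $1$, so it is an ordinary retraction.

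The one point that needs care is the cross-block relations in the first case. This is exactly where condition (2) of parabolic-retract-compatibility is used: the label between two blocks does not depend on which elements are chosen, which is what lets us replace $x$ by $y$.
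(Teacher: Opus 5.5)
Your proposal is correct and follows the paper's proof essentially verbatim. It uses the same case split on whether the block containing $x$ is a singleton, sends $x$ to $1$ in the singleton case, and in the other case sends $x$ to the $y$ supplied by \autoref{lemma_7_inductive_retract}, using condition (2) to get $m_{x,z}=n_{ij}=m_{y,z}$ for the cross-block relations.
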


\begin{proof}
We have a partition $S = T_1 \sqcup \dots \sqcup T_k$ such that:

\begin{enumerate}
    \item For all $i \in \{1, \dots, k\}$, $M_{T_i}$ is retract-compatible.
    \item For each pair $i,j \in \{1, \dots, k\}$ with $i \neq j$, there exists $n_{ij}$, which is either an even number or $\infty$, such that $m_{a,b} = n_{ij}$ for all $(a,b) \in T_i \times T_j$. 
\end{enumerate}

Assume without loss of generality that $x \in T_1$. We consider two cases:

\begin{itemize}
    \item \textbf{Case 1:} $T_1 = \{x\}$. Define
    \[
    \varphi_x(z) = z \quad \text{for } z \in S \setminus \{x\}, \qquad \varphi_x(x) = 1.
    \]
    This is well-defined because, by parabolic-retract-compatibility, $x$ only shares even or $\infty$ relations with other generators. Hence, if $m_{x,z} \neq \infty$,
    \[
    \Pi(\varphi_x(x), \varphi_x(z), m_{x,z}) = z^{\frac{m_{x,z}}{2}} = \Pi(\varphi_x(z), \varphi_x(x), m_{x,z}).
    \]

    \item \textbf{Case 2:} $|T_1| \ge 2$. By \autoref{lemma_7_inductive_retract}, there exists an ordinary retraction 
    \[
    \psi_x: A_{T_1} \to A_{T_1 \setminus \{x\}}
    \] 
    such that $\psi_x(x) = y \in T_1 \setminus \{x\}$. Define
    \[
    \varphi_x(z) = 
    \begin{cases} 
    \psi_x(z) & \text{if } z \in T_1, \\
    z & \text{otherwise}.
    \end{cases}
    \]
    To check that $\varphi_x$ is a homomorphism, consider $z \in S \setminus \{x\}$ with $m_{x,z} \neq \infty$:

    \begin{itemize}
        \item If $m_{x,z}$ is odd, then $z \in T_1$, and since $\psi_x$ is a homomorphism, one has
\begin{align*}
    \Pi(\varphi_x(x), \varphi_x(z), m_{x,z}) 
       & = \Pi(\psi_x(x), \psi_x(z), m_{x,z}) \\
       & = \Pi(\psi_x(z), \psi_x(x), m_{x,z}) \\
       & = \Pi(\varphi_x(z), \varphi_x(x), m_{x,z}).
\end{align*}
        
        \item If $m_{x,z}$ is even, then $z \in T_j$ for some $j \neq 1$, and $m_{x,z} = n_{1,j} = m_{y,z}$. Therefore,
        \[
        \Pi(\varphi_x(x), \varphi_x(z), m_{x,z}) 
        = \Pi(y, z, m_{y,z}) 
        = \Pi(z, y, m_{y,z}) 
        = \Pi(\varphi_x(z), \varphi_x(x), m_{x,z}).
        \]
    \end{itemize}
\end{itemize}
\end{proof}

Now we are ready to prove our main theorems.

\begin{proof}[Proof of \autoref{theoremB}]
One direction follows from \autoref{lemma_6}. 

Now, suppose that $M$ is parabolic-retract-compatible. Notice that any submatrix of $M$ corresponding to a standard parabolic subgroup of $A_S$ is again parabolic-retract-compatible. By \autoref{lemma_8_explicit_retract}, for any $X \subseteq S$ and any $x \in X$, there exists a retraction 
\[
A_X \to A_{X \setminus \{x\}}.
\] 
Then, applying \autoref{lemma_0_induction}, we conclude that $A_S$ is parabolic-retractable.
\end{proof}

\begin{proof}[Proof of \autoref{theoremA}]
If $A_S$ is parabolic-retractable, then, by \autoref{lemma_0_induction}, each subgroup $A_{a,b,c}$ retracts onto $A_{b,c}$.

Next, we show that the fact that every $A_{a,b,c}$ is parabolic-retractable, which is equivalent to the condition stated in the theorem, implies that the Coxeter matrix $M$ associated to $A_S$ is parabolic-retract-compatible. Once this is established, \autoref{theoremB} implies that $A_S$ is parabolic-retractable.

The argument is essentially the same as the one in the proof of \autoref{lemma_6}. 
Consider the Coxeter graph $\Gamma$ of $A_S$ and define a new graph $\Gamma_{\mathrm{odd}}$ with the same set of vertices as $\Gamma$, whose edges are exactly those labeled by odd integers. 
Let $\Gamma_1, \ldots, \Gamma_k$ be the connected components of $\Gamma_{\mathrm{odd}}$, and define $T_i$ as the set of vertices of $\Gamma_i$. 
We have a partition $S = T_1 \sqcup \cdots \sqcup T_k$, and, again by \autoref{lemma_1_odd_odd_odd}, each $\Gamma_i$ is a complete Coxeter graph, i.e. all labels in $M_{T_i}$ are odd.


If $m_{a,b}, m_{a,c}$, and $m_{b,c}$ are all odd, then by \autoref{lemma_4_mbc_divides_odd_labels_all_cases}, up to a permutation of $a, b, c$, we have
\[
m_{a,b} = m_{a,c} \quad \text{and} \quad m_{b,c} \mid m_{a,b}.
\] 
This guarantees that each $M_{T_i}$ is retract-compatible, which satisfies the first condition of parabolic-retract-compatibility.

For the second condition, \autoref{lemma_2_odd_even_odd} shows that either there is no edge connecting vertices of $T_i$ and $T_j$, or all vertices of $T_i$ are connected to all vertices of $T_j$ by edges with even labels. Then, by \autoref{lemma_5_even_even_odd}, all these even-labeled edges must have the same label, which satisfies the second condition for parabolic-retract-compatibility.
\end{proof}

\begin{proof}[Proof of \autoref{theoremC}]
Suppose $A_S$ is a parabolic-retractable Artin group. By \autoref{lemma_6}, the associated Coxeter matrix $M$ is parabolic-retract-compatible. We will construct an ordinary retraction for every subset $X \subseteq S$. 

Write $X = S \setminus \{x_1, \dots, x_p\}$. Since any submatrix of $M$ corresponding to a standard parabolic subgroup is again parabolic-retract-compatible, \autoref{lemma_8_explicit_retract} guarantees that, for each $k = 1, \ldots, p$, there is an ordinary retraction 
\[
\varphi_k : A_{S \setminus \{x_1, \ldots, x_{k-1}\}} \to A_{S \setminus \{x_1, \ldots, x_k\}}.
\] 
Composing these retractions, the desired retraction is 
$\varphi_p \circ \cdots \circ \varphi_1$.
\end{proof}

\bibliography{main}

\end{document}

%% file: example1.pdf_tex
\begingroup%
  \makeatletter%
  \providecommand\color[2][]{%
    \errmessage{(Inkscape) Color is used for the text in Inkscape, but the package 'color.sty' is not loaded}%
    \renewcommand\color[2][]{}%
  }%
  \providecommand\transparent[1]{%
    \errmessage{(Inkscape) Transparency is used (non-zero) for the text in Inkscape, but the package 'transparent.sty' is not loaded}%
    \renewcommand\transparent[1]{}%
  }%
  \providecommand\rotatebox[2]{#2}%
  \newcommand*\fsize{\dimexpr\f@size pt\relax}%
  \newcommand*\lineheight[1]{\fontsize{\fsize}{#1\fsize}\selectfont}%
  \ifx\svgwidth\undefined%
    \setlength{\unitlength}{431.9423261bp}%
    \ifx\svgscale\undefined%
      \relax%
    \else%
      \setlength{\unitlength}{\unitlength * \real{\svgscale}}%
    \fi%
  \else%
    \setlength{\unitlength}{\svgwidth}%
  \fi%
  \global\let\svgwidth\undefined%
  \global\let\svgscale\undefined%
  \makeatother%
  \begin{picture}(1,0.35481692)%
    \lineheight{1}%
    \setlength\tabcolsep{0pt}%
    \put(0,0){\includegraphics[width=\unitlength,page=1]{example1.pdf}}%
    \put(0.84997731,0.17208902){\color[rgb]{0,0,0}\makebox(0,0)[lt]{\lineheight{1.25}\smash{\begin{tabular}[t]{l}$5$\end{tabular}}}}%
    \put(0.59641432,0.17208902){\color[rgb]{0,0,0}\makebox(0,0)[lt]{\lineheight{1.25}\smash{\begin{tabular}[t]{l}$25$\end{tabular}}}}%
    \put(0.72548311,0.07665723){\color[rgb]{0,0,0}\makebox(0,0)[lt]{\lineheight{1.25}\smash{\begin{tabular}[t]{l}$25$\end{tabular}}}}%
    \put(0.34989996,0.17208903){\color[rgb]{0,0,0}\makebox(0,0)[lt]{\lineheight{1.25}\smash{\begin{tabular}[t]{l}$75$\end{tabular}}}}%
    \put(0.62109646,0.01915655){\color[rgb]{0,0,0}\makebox(0,0)[lt]{\lineheight{1.25}\smash{\begin{tabular}[t]{l}$75$\end{tabular}}}}%
    \put(0.50354193,0.07427427){\color[rgb]{0,0,0}\makebox(0,0)[lt]{\lineheight{1.25}\smash{\begin{tabular}[t]{l}$75$\end{tabular}}}}%
    \put(0.09613089,0.17208903){\color[rgb]{0,0,0}\makebox(0,0)[lt]{\lineheight{1.25}\smash{\begin{tabular}[t]{l}$375$\end{tabular}}}}%
    \put(0.21458259,0.20692778){\color[rgb]{0,0,0}\makebox(0,0)[lt]{\lineheight{1.25}\smash{\begin{tabular}[t]{l}$375$\end{tabular}}}}%
    \put(0.33535691,0.25570201){\color[rgb]{0,0,0}\makebox(0,0)[lt]{\lineheight{1.25}\smash{\begin{tabular}[t]{l}$375$\end{tabular}}}}%
    \put(0.41500661,0.31965128){\color[rgb]{0,0,0}\makebox(0,0)[lt]{\lineheight{1.25}\smash{\begin{tabular}[t]{l}$375$\end{tabular}}}}%
  \end{picture}%
\endgroup%

%% file: example2.pdf_tex
\begingroup%
  \makeatletter%
  \providecommand\color[2][]{%
    \errmessage{(Inkscape) Color is used for the text in Inkscape, but the package 'color.sty' is not loaded}%
    \renewcommand\color[2][]{}%
  }%
  \providecommand\transparent[1]{%
    \errmessage{(Inkscape) Transparency is used (non-zero) for the text in Inkscape, but the package 'transparent.sty' is not loaded}%
    \renewcommand\transparent[1]{}%
  }%
  \providecommand\rotatebox[2]{#2}%
  \newcommand*\fsize{\dimexpr\f@size pt\relax}%
  \newcommand*\lineheight[1]{\fontsize{\fsize}{#1\fsize}\selectfont}%
  \ifx\svgwidth\undefined%
    \setlength{\unitlength}{457.30227493bp}%
    \ifx\svgscale\undefined%
      \relax%
    \else%
      \setlength{\unitlength}{\unitlength * \real{\svgscale}}%
    \fi%
  \else%
    \setlength{\unitlength}{\svgwidth}%
  \fi%
  \global\let\svgwidth\undefined%
  \global\let\svgscale\undefined%
  \makeatother%
  \begin{picture}(1,0.71493671)%
    \lineheight{1}%
    \setlength\tabcolsep{0pt}%
    \put(0,0){\includegraphics[width=\unitlength,page=1]{example2.pdf}}%
    \put(0.15837456,0.11252882){\color[rgb]{0,0,0}\makebox(0,0)[lt]{\lineheight{1.25}\smash{\begin{tabular}[t]{l}$5$\end{tabular}}}}%
    \put(0.15877025,0.34345132){\color[rgb]{0,0,0}\makebox(0,0)[lt]{\lineheight{1.25}\smash{\begin{tabular}[t]{l}$25$\end{tabular}}}}%
    \put(0.05965584,0.22282219){\color[rgb]{0,0,0}\makebox(0,0)[lt]{\lineheight{1.25}\smash{\begin{tabular}[t]{l}$25$\end{tabular}}}}%
    \put(0.15877026,0.57244871){\color[rgb]{0,0,0}\makebox(0,0)[lt]{\lineheight{1.25}\smash{\begin{tabular}[t]{l}$75$\end{tabular}}}}%
    \put(0.01047237,0.33167699){\color[rgb]{0,0,0}\makebox(0,0)[lt]{\lineheight{1.25}\smash{\begin{tabular}[t]{l}$75$\end{tabular}}}}%
    \put(0.05612292,0.43245559){\color[rgb]{0,0,0}\makebox(0,0)[lt]{\lineheight{1.25}\smash{\begin{tabular}[t]{l}$75$\end{tabular}}}}%
    \put(0,0){\includegraphics[width=\unitlength,page=2]{example2.pdf}}%
    \put(0.53722021,0.33946406){\color[rgb]{0,0,0}\makebox(0,0)[lt]{\lineheight{1.25}\smash{\begin{tabular}[t]{l}$7$\end{tabular}}}}%
    \put(0,0){\includegraphics[width=\unitlength,page=3]{example2.pdf}}%
    \put(0.84627395,0.44434801){\color[rgb]{0,0,0}\makebox(0,0)[lt]{\lineheight{1.25}\smash{\begin{tabular}[t]{l}$21$\end{tabular}}}}%
    \put(0.94262382,0.33534079){\color[rgb]{0,0,0}\makebox(0,0)[lt]{\lineheight{1.25}\smash{\begin{tabular}[t]{l}$21$\end{tabular}}}}%
    \put(0.85028476,0.22789584){\color[rgb]{0,0,0}\makebox(0,0)[lt]{\lineheight{1.25}\smash{\begin{tabular}[t]{l}$3$\end{tabular}}}}%
    \put(0,0){\includegraphics[width=\unitlength,page=4]{example2.pdf}}%
    \put(0.33171651,0.0962003){\color[rgb]{0,0,0}\makebox(0,0)[lt]{\lineheight{1.25}\smash{\begin{tabular}[t]{l}$4$\end{tabular}}}}%
    \put(0.25288625,0.24148522){\color[rgb]{0,0,0}\makebox(0,0)[lt]{\lineheight{1.25}\smash{\begin{tabular}[t]{l}$4$\end{tabular}}}}%
    \put(0.25984239,0.31605327){\color[rgb]{0,0,0}\makebox(0,0)[lt]{\lineheight{1.25}\smash{\begin{tabular}[t]{l}$4$\end{tabular}}}}%
    \put(0.28281077,0.39092137){\color[rgb]{0,0,0}\makebox(0,0)[lt]{\lineheight{1.25}\smash{\begin{tabular}[t]{l}$4$\end{tabular}}}}%
    \put(0.24799259,0.47473642){\color[rgb]{0,0,0}\makebox(0,0)[lt]{\lineheight{1.25}\smash{\begin{tabular}[t]{l}$4$\end{tabular}}}}%
    \put(0.30304748,0.51681025){\color[rgb]{0,0,0}\makebox(0,0)[lt]{\lineheight{1.25}\smash{\begin{tabular}[t]{l}$4$\end{tabular}}}}%
    \put(0.34695637,0.58620135){\color[rgb]{0,0,0}\makebox(0,0)[lt]{\lineheight{1.25}\smash{\begin{tabular}[t]{l}$4$\end{tabular}}}}%
    \put(0.29072486,0.16205247){\color[rgb]{0,0,0}\makebox(0,0)[lt]{\lineheight{1.25}\smash{\begin{tabular}[t]{l}$4$\end{tabular}}}}%
    \put(0.74306876,0.14405502){\color[rgb]{0,0,0}\makebox(0,0)[lt]{\lineheight{1.25}\smash{\begin{tabular}[t]{l}$6$\end{tabular}}}}%
    \put(0.74700012,0.21053021){\color[rgb]{0,0,0}\makebox(0,0)[lt]{\lineheight{1.25}\smash{\begin{tabular}[t]{l}$6$\end{tabular}}}}%
    \put(0.75593496,0.31995895){\color[rgb]{0,0,0}\makebox(0,0)[lt]{\lineheight{1.25}\smash{\begin{tabular}[t]{l}$6$\end{tabular}}}}%
    \put(0.77924667,0.39082674){\color[rgb]{0,0,0}\makebox(0,0)[lt]{\lineheight{1.25}\smash{\begin{tabular}[t]{l}$6$\end{tabular}}}}%
    \put(0.7482665,0.48096773){\color[rgb]{0,0,0}\makebox(0,0)[lt]{\lineheight{1.25}\smash{\begin{tabular}[t]{l}$6$\end{tabular}}}}%
    \put(0.76183801,0.56907091){\color[rgb]{0,0,0}\makebox(0,0)[lt]{\lineheight{1.25}\smash{\begin{tabular}[t]{l}$6$\end{tabular}}}}%
  \end{picture}%
\endgroup%

%% file: example3.pdf_tex
\begingroup%
  \makeatletter%
  \providecommand\color[2][]{%
    \errmessage{(Inkscape) Color is used for the text in Inkscape, but the package 'color.sty' is not loaded}%
    \renewcommand\color[2][]{}%
  }%
  \providecommand\transparent[1]{%
    \errmessage{(Inkscape) Transparency is used (non-zero) for the text in Inkscape, but the package 'transparent.sty' is not loaded}%
    \renewcommand\transparent[1]{}%
  }%
  \providecommand\rotatebox[2]{#2}%
  \newcommand*\fsize{\dimexpr\f@size pt\relax}%
  \newcommand*\lineheight[1]{\fontsize{\fsize}{#1\fsize}\selectfont}%
  \ifx\svgwidth\undefined%
    \setlength{\unitlength}{165.18829946bp}%
    \ifx\svgscale\undefined%
      \relax%
    \else%
      \setlength{\unitlength}{\unitlength * \real{\svgscale}}%
    \fi%
  \else%
    \setlength{\unitlength}{\svgwidth}%
  \fi%
  \global\let\svgwidth\undefined%
  \global\let\svgscale\undefined%
  \makeatother%
  \begin{picture}(1,0.78356278)%
    \lineheight{1}%
    \setlength\tabcolsep{0pt}%
    \put(0,0){\includegraphics[width=\unitlength,page=1]{example3.pdf}}%
    \put(0.47885168,0.06097409){\color[rgb]{0,0,0}\makebox(0,0)[lt]{\lineheight{1.25}\smash{\begin{tabular}[t]{l}$45$\end{tabular}}}}%
    \put(0,0){\includegraphics[width=\unitlength,page=2]{example3.pdf}}%
    \put(0.76760571,0.39513077){\color[rgb]{0,0,0}\makebox(0,0)[lt]{\lineheight{1.25}\smash{\begin{tabular}[t]{l}$45$\end{tabular}}}}%
    \put(0.50552187,0.44754751){\color[rgb]{0,0,0}\makebox(0,0)[lt]{\lineheight{1.25}\smash{\begin{tabular}[t]{l}$45$\end{tabular}}}}%
    \put(0.26711813,0.2509847){\color[rgb]{0,0,0}\makebox(0,0)[lt]{\lineheight{1.25}\smash{\begin{tabular}[t]{l}$45$\end{tabular}}}}%
    \put(0.19355004,0.39007381){\color[rgb]{0,0,0}\makebox(0,0)[lt]{\lineheight{1.25}\smash{\begin{tabular}[t]{l}$5$\end{tabular}}}}%
    \put(0.68972809,0.21822418){\color[rgb]{0,0,0}\makebox(0,0)[lt]{\lineheight{1.25}\smash{\begin{tabular}[t]{l}$3$\end{tabular}}}}%
  \end{picture}%
\endgroup%